\tikzstyle{v} = [circle, draw, inner sep=2pt, minimum size=3pt, fill=black]
\tikzstyle{l} = [rectangle, draw, rounded corners]
\theoremstyle{plain}% default
\newtheorem{theorem}{Theorem}[section]
\newtheorem{lemma}[theorem]{Lemma}
\newtheorem{proposition}[theorem]{Proposition}
\newtheorem{corollary}[theorem]{Corollary}
\theoremstyle{definition}
\newtheorem{definition}[theorem]{Definition}
\newtheorem{example}[theorem]{Example}
\newtheorem{question}[theorem]{Question}
\newtheorem{remark}[theorem]{Remark}
\DeclareMathOperator{\codim}{codim}
\DeclareMathOperator{\wt}{wt}
\DeclareMathOperator{\Kernel}{Ker}
\DeclareMathOperator{\Image}{Im}
\DeclareMathOperator{\Mat}{M}
\DeclareMathOperator{\GL}{GL}
\DeclareMathOperator{\Hom}{Hom}
\DeclareMathOperator{\rank}{rank}
\title {The Coboundary Quasi-Polynomials of Hyperplane Arrangements
over Residually Finite Dedekind Domains}
\author{
Masamichi Kuroda
\thanks{Faculty of Engineering, Nippon Bunri University, Oita 870-0316, Japan. \\ 
E-mail:kurodamm@nbu.ac.jp}
\and
Norihiro Nakashima
\thanks{Department of Mathematics, Nagoya Institute of Technology, Aichi, 466-8555, Japan. \\
E-mail:nakashima@nitech.ac.jp}
\and
Shuhei Tsujie
\thanks{Department of Mathematics, Hokkaido University of Education, Asahikawa, Hokkaido 070-8621, Japan. \\
E-mail:tsujie.shuhei@a.hokkyodai.ac.jp}
}
\date{}
\begin{document}
\maketitle
	
\begin{abstract}
The characteristic polynomial plays an important role in study of hyperplane arrangements. 
There are several refinements of the characteristic polynomial. 
One of them is the coboundary polynomial defined by Crapo. 
Another refinement is the characteristic quasi-polynomial for an integral arrangement defined by Kamiya, Takemura, and Terao. 
Recently, the first and third authors introduced the characteristic quasi-polynomial for arrangement defined over a residually finite Dedekind domain. 
In this article, we introduce the common refinement of the coboundary polynomial and characteristic quasi-polynomial for an arrangement over a residually finite Dedekind domain. 
\end{abstract}

{\footnotesize \textit{Keywords}: 
hyperplane arrangement, 
coboundary polynomial, 
characteristic polynomial, 
characteristic quasi-polynomial
}

{\footnotesize \textit{2020 MSC}: 
%05A18, %Partitions of sets [05Axx Enumerative combinatorics]
%05A19, %Combinatorial identities, bijective combinatorics [05Axx Enumerative combinatorics]
%05C22, %Signed and weighted graphs [05Cxx Graph theory]
%05C15, %Coloring of graphs and hypergraphs [05Cxx Graph theory]
52C35, %Arrangements of points, flats, hyperplanes [52Cxx Discrete geometry]
%32S22,  %Relations with arrangements of hyperplanes [32Sxx Singularities]
%20F55,  %Reflection and Weyl groups [20Fxx Special aspects of infinite or finite groups]
%13N15 %Derivations [13Nxx Differential algebra]
%05B35, %Combinatorial aspects matroids and geometric lattices [05Bxx Designs and conﬁgurations]
05E40 %Combinatorial aspects of commutative algebra
}

\tableofcontents

\section{Introduction}

Let $F$ be a field and $\ell$ a positive integer. 
A (central) \textbf{hyperplane arrangement} $\mathcal{A}$ is a finite collection of linear hyperplanes in a vector space $F^{\ell}$. 
The \textbf{characteristic polynomial} of $\mathcal{A}$ is defined by the combinatorial information of the intersections of the hyperplanes in $\mathcal{A}$. 

The \textbf{coboundary polynomial} introduced by Crapo \cite{crapo1969tutte-am} is an important refinement of the characteristic polynomial, which is equivalent to the \textbf{Tutte polynomial}. 
For an arrangement over a finite field, the coboundary polynomial is also equivalent to the \textbf{weight enumerator} of the corresponding linear code. 

For an arrangement over integers, Kamiya, Takemura, and Terao \cite{kamiya2008periodicity-joac} introduced the \textbf{characteristic quasi-polynomial}, which is another refinement of the characteristic polynomial. 
Recently, the first and third authors generalized the characteristic quasi-polynomial for an arrangement over a residually finite Dedekind domain, that is,  a Dedekind domain in which every residue ring modulo a nonzero ideal is finite.

The main purpose of this article is to introduce the \textbf{coboundary quasi-polynomial} based on the relation between the coboundary polynomial and the weight enumerator.
We will prove that the coboundary quasi-polynomial is the common refinement of the coboundary polynomial and the characteristic quasi-polynomial. 
Let $\mathcal{O}$ denote a residually finite Dedekind domain and $K$ its field of fractions. 
Suppose that $\mathcal{A}$ is a finite subset of $\mathcal{O}^{\ell}$. 
Then the relations of the (quasi-)polynomials are summarized as follows. 

\begin{figure}[h]
\centering
\begin{tikzpicture}
\node[align=center, draw, rectangle] (cp) at (0,2.5) {Characteristic polynomial \\ $\chi_{\mathcal{A}(K)}(t)$};
\node[align=center, draw, rectangle] (cqp) at (8,2.5) {Characteristic quasi-polynomial \\ $\chi_{\mathcal{A}}^{\mathrm{quasi}}(\mathfrak{a})$ \\ ($\chi_{\mathcal{A}}^{\mathrm{quasi}}(q)$ in the case $\mathcal{O}=\mathbb{Z}$)};
\node[align=center, draw, rectangle] (cbp) at (0,0) {Coboundary polynomial \\ $\overline{\chi}_{\mathcal{A}(K)}(t,x)$};
\node[align=center, draw, rectangle] (cbqp) at (8,0) {Coboundary quasi-polynomial \\ $\overline{\chi}_{\mathcal{A}}^{\mathrm{quasi}}(\mathfrak{a},x)$ \\ ($\overline{\chi}_{\mathcal{A}}^{\mathrm{quasi}}(q,x)$ in the case $\mathcal{O}=\mathbb{Z}$)};
\draw[-Stealth] (cqp) -- node[anchor=south, align=center, font=\footnotesize]{$\mathcal{O}$-constituent}(cp);
\draw[-Stealth] (cbqp) -- node[anchor=south, align=center, font=\footnotesize]{$\mathcal{O}$-constituent}(cbp);
\draw[-Stealth] (cbqp) -- node[anchor=west, font=\footnotesize]{$x=0$} (cqp);
\draw[-Stealth] (cbp) -- node[anchor=west, font=\footnotesize]{$x=0$} (cp);
\end{tikzpicture}
\end{figure}

We also prove that every constituent of the coboundary quasi-polynomial has a combinatorial interpretation as every constituent of the characteristic quasi-polynomial does. 
The results explained above are stated in \zcref{main theorem}.

\subsection{Characteristic polynomials, coboundary polynomials, and weight enumerators}

Let $\mathcal{A}$ be a hyperplane arrangement in the $\ell$-dimensional vector space $F^{\ell}$ over a field $F$. 
Define the \textbf{intersection lattice} $L(\mathcal{A})$ by 
\begin{align*}
L(\mathcal{A}) \coloneqq \Set{\bigcap_{H \in \mathcal{B}}H | \mathcal{B} \subseteq \mathcal{A}}
\end{align*}
which is partially ordered by reverse inclusion. 
Note that when $\mathcal{B}$ is the emptyset we regard the intersection as the ambient space $\hat{0} \coloneqq F^{\ell}$, which is the minimum element of $L(\mathcal{A})$.  

Define the \textbf{characteristic polynomial} $\chi_{\mathcal{A}}(t)$ and the \textbf{coboundary polynomial} $\overline{\chi}_{\mathcal{A}}(t,x)$ by 
\begin{align*}
\chi_{\mathcal{A}}(t) \coloneqq \sum_{X \in L(\mathcal{A})}\mu(\hat{0}, X)t^{\dim X} \quad \text{ and } \quad 
\overline{\chi}_{\mathcal{A}}(t,x) \coloneqq \sum_{X \in L(\mathcal{A})} \left(\sum_{Y \geq X}\mu(X,Y)t^{\dim Y}\right)x^{a(X)}, 
\end{align*}
where $\mu$ is the \textbf{M\"obius function} on $L(\mathcal{A})$ defined recursively by 
\begin{align*}
\mu(X,Y) \coloneqq \begin{cases}
1 & \text{ if } Y=X; \\
-\sum_{X \leq Z < Y}\mu(X,Z) & \text{ if } Y<X; \\
0 & \text{ otherwise,}
\end{cases}
\end{align*}
and $a(X)$ denotes the number of hyperplanes in $\mathcal{A}$ containing $X$. 
Note that the coboundary polynomial is a refinement of the characteristic polynomial since $\overline{\chi}_{\mathcal{A}}(t,0) = \chi_{\mathcal{A}}(t)$ holds. 

The \textbf{Tutte polynomial} $T_{\mathcal{A}}(x,y)$ of $\mathcal{A}$ is defined by 
\begin{align*}
T_{\mathcal{A}}(x,y) = \sum_{\mathcal{B} \subseteq \mathcal{A}}(x-1)^{r(\mathcal{A})-r(\mathcal{B})}(y-1)^{\#\mathcal{B}-r(\mathcal{B})},
\end{align*}
where $r(\mathcal{B}) \coloneqq \codim \bigcap_{H \in \mathcal{B}}H$, which coincides with the Tutte polynomial of the linear matroid defined by $\mathcal{A}$. 
The coboundary polynomial and the Tutte polynomial are related by 
\begin{align*}
\overline{\chi}_{\mathcal{A}}((x-1)(y-1), y) = (x-1)^{\ell-r(\mathcal{A})}(y-1)^{\ell}T_{\mathcal{A}}(x,y). 
\end{align*}
Therefore they are equivalent notion. 
See \cite{ardila2007computing-pjom} for the study of the Tutte polynomials and the coboundary polynomials of arrangements. 

In the case where $F$ is a finite field $\mathbb{F}_{p^{m}}$, it is well known that $\chi_{\mathcal{A}}(p^{m})$ is the cardinality of the complement of $\mathcal{A}$, that is, 
\begin{align*}
\chi_{\mathcal{A}}(p^{m}) = \#\left(\mathbb{F}_{p^{m}}^{\ell}\setminus\bigcup_{H \in \mathcal{A}}H\right). 
\end{align*}
The coboundary polynomial $\overline{\chi}_{\mathcal{A}}(t,x)$ has a similar property as follows (\zcref{Greene}).

Let $C$ be a linear code (i.e., a vector subspace) in $\mathbb{F}_{p^{m}}^{n}$. 
Define the \textbf{weight enumerator} $W_{C}(x,y)$ of $C$ by 
\begin{align*}
W_{C}(x,y) \coloneqq \sum_{u \in C}x^{n-\wt(u)}y^{\wt(u)}, 
\end{align*}
where $\wt(u)$ denotes the \textbf{Hamming weight} of a codeword $u \in C$.

Let $\ell \coloneqq \dim C$. 
Then there exists a matrix $G = (\alpha_{1} \cdots \alpha_{n}) \in \Mat_{\ell \times n}(\mathbb{F}_{p^{m}})$, called a \textbf{generator matrix} of $C$, such that $\Image G = C, \, \Kernel G = \{0\}$,  and $\alpha_{i} \neq k\alpha_{j}$ for any $k \in \mathbb{F}_{p^{m}}$ and distinct $i,j \in \{1, \dots, n\}$, where we identify the matrix $G$ with a map from $\mathbb{F}_{p^{m}}^{\ell}$ to $\mathbb{F}_{p^{m}}^{n}$ defined by the right multiplication $u \mapsto uG$. 
Define the arrangement $\mathcal{A}_{G}$ by $\mathcal{A}_{G} = \{H_{\alpha_{1}}, \dots, H_{\alpha_{n}}\}$, where 
\begin{align*}
H_{\alpha_{i}} \coloneqq \Set{u \in \mathbb{F}_{p^{m}}^{\ell} | u\alpha_{i} = 0}. 
\end{align*}

\begin{theorem}[\cite{greene1976weight-siam}]\label{Greene}
With the notation above, 
\begin{align*}
\overline{\chi}_{\mathcal{A}_{G}}(p^{m},x) = W_{C}(x,1). 
\end{align*}
\end{theorem}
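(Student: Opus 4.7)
The plan is to compute $W_C(x,1)$ directly by parametrizing codewords $u \in C$ as $u = vG$ for $v \in \mathbb{F}_{p^m}^{\ell}$ (a bijection since $\Kernel G = \{0\}$), grouping the sum according to an appropriate flat of $\mathcal{A}_G$, and then recovering the coboundary polynomial via Möbius inversion on $L(\mathcal{A}_G)$.

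\medskip

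First I would translate the Hamming weight into arrangement data. For $v \in \mathbb{F}_{p^m}^{\ell}$, the coordinate $v\alpha_i$ of $vG$ vanishes precisely when $v \in H_{\alpha_i}$, so
\begin{align*}
n - \wt(vG) = \#\Set{i | v \in H_{\alpha_i}}.
\end{align*}
For each $v$, let $X_v \coloneqq \bigcap_{i:\,v \in H_{\alpha_i}} H_{\alpha_i} \in L(\mathcal{A}_G)$. One checks that the hyperplanes containing $X_v$ are exactly those containing $v$, hence $n - \wt(vG) = a(X_v)$, and also that $v \in X$ if and only if $X_v \geq X$ in $L(\mathcal{A}_G)$ (reverse inclusion).

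\medskip

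Next I would partition the sum defining $W_C(x,1) = \sum_{v}x^{a(X_v)}$ by the value of $X_v$. Setting $f(X) \coloneqq \#\{v : X_v = X\}$ and $g(X) \coloneqq \#\{v : v \in X\} = p^{m \dim X}$, the relation $g(X) = \sum_{Y \geq X} f(Y)$ combined with Möbius inversion on $L(\mathcal{A}_G)$ yields
\begin{align*}
f(X) = \sum_{Y \geq X} \mu(X,Y)\, p^{m\dim Y}.
\end{align*}
Therefore
\begin{align*}
W_C(x,1) = \sum_{X \in L(\mathcal{A}_G)} f(X)\, x^{a(X)} = \sum_{X \in L(\mathcal{A}_G)}\left(\sum_{Y \geq X}\mu(X,Y)\, p^{m\dim Y}\right)x^{a(X)} = \overline{\chi}_{\mathcal{A}_G}(p^m,x),
\end{align*}
which is exactly the claim.

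\medskip

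There is no real obstacle here; the argument is essentially bookkeeping once one identifies $n-\wt(vG)$ with $a(X_v)$. The only subtle point to verify carefully is the reverse-inclusion convention: namely that $v \in X$ translates to $X_v \geq X$ so that the summation range for Möbius inversion is the upper set of $X$ in $L(\mathcal{A}_G)$. Once this is pinned down, the rest reduces to standard Möbius inversion on the intersection lattice.
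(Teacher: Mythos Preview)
Your proof is correct. Note, however, that the paper does not supply its own proof of Greene's theorem; it is quoted from \cite{greene1976weight-siam} as a known result. That said, your argument is exactly the finite-field specialization of the computation the paper carries out later in the proof of \Cref{main theorem}(\ref{main theorem 4}): there the authors partition $(\mathfrak{a}^{-1}/\mathcal{O})^{\ell}$ into the complements $M(Z)[\mathfrak{a}]$ of layers (\Cref{layer decomposition}(\ref{layer decomposition 1})), identify the exponent $p(\mathcal{A})-\wt(\pi(u)G)$ with $a(Z)$ via \Cref{a(Z)}, and then apply M\"obius inversion (\Cref{layer decomposition}(\ref{layer decomposition 2})). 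Your flat $X_v$ plays the role of the unique layer $Z$ with $\pi(u)\in M(Z)$, so the two arguments are structurally identical.
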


Based on the equality in \zcref{Greene}, we will define the coboundary quasi-polynomial, which can be regarded as a generalization of weight enumerator. 
Jurrius and Pellikaan \cite[Section 1.5.2]{jurrius2013codes-soctac} studied a similar but distinct generalization of the weight enumerator, which is called the extended weight enumerator.

\subsection{Characteristic quasi-polynomials of integral arrangements}

There is another refinement of the characteristic polynomial, which is called the \textbf{characteristic quasi-polynomial}. 
Let $\mathcal{A} = \{\alpha_{1}, \dots, \alpha_{n}\}$ be a finite subset of nonzero vectors in $\mathbb{Z}^{\ell}$ and define the hyperplane $H_{\alpha_{i}}$ in $\mathbb{Q}^{\ell}$ by 
\begin{align*}
H_{\alpha_{i}, \mathbb{Q}} \coloneqq \Set{u \in \mathbb{Q}^{\ell} | u\alpha_{i} = 0}. 
\end{align*}
Then $\mathcal{A}(\mathbb{Q}) \coloneqq \{H_{\alpha_{1}, \mathbb{Q}}, \dots, H_{\alpha_{n}, \mathbb{Q}}\}$ is a hyperplane arrangement over $\mathbb{Q}$. 
Since every vector in $\mathcal{A}$ is integral, for every positive integer $q$, we can consider the arrangement $\mathcal{A}(\mathbb{Z}/q\mathbb{Z}) \coloneqq \{H_{\alpha_{1}, \mathbb{Z}/q\mathbb{Z}}, \dots, H_{\alpha_{n}, \mathbb{Z}/q\mathbb{Z}}\}$ in $\left(\mathbb{Z}/q\mathbb{Z}\right)^{\ell}$ by taking modulo $q$, where 
\begin{align*}
H_{\alpha_{i}, \mathbb{Z}/q\mathbb{Z}} \coloneqq \Set{[u]_{q} \in \left(\mathbb{Z}/q\mathbb{Z}\right)^{\ell} | [u]_{q}\alpha_{i} = 0}
\end{align*}
and $[u]_{q}$ stands for the equivalence class of $u \in \mathbb{Z}^{\ell}$.

Kamiya, Takemura, and Terao introduced the map $\chi_{\mathcal{A}}^{\mathrm{quasi}} \colon \mathbb{Z}_{>0} \to \mathbb{Z}$ defined by 
\begin{align*}
\chi_{\mathcal{A}}^{\mathrm{quasi}}(q) \coloneqq \# \left(\left(\mathbb{Z}/q\mathbb{Z}\right)^{\ell} \setminus \bigcup_{i=1}^{\ell} H_{\alpha_{i}, \mathbb{Z}/q\mathbb{Z}}\right),  
\end{align*}
which is called the \textbf{characteristic quasi-polynomial} of $\mathcal{A}$ because of the following theorem. 

\begin{theorem}[{\cite[Theorem 2.4 and Theorem 2.5]{kamiya2008periodicity-joac}}]\label{KTT}
The function $\chi_{\mathcal{A}}^{\mathrm{quasi}}(q)$ is a quasi-polynomial with GCD-property and its first constituent coincides with the characteristic polynomial of the hyperplane arrangement $\mathcal{A}(\mathbb{Q})$. 
Namely, there exist a positive integer $\rho_{\mathcal{A}}$ and polynomial $\chi_{\mathcal{A}}^{k}(t) \in \mathbb{Z}[t]$ for every divisor $k$ of $\rho_{\mathcal{A}}$ such that $\gcd(q,\rho_{\mathcal{A}}) = k$ implies $\chi_{\mathcal{A}}^{\mathrm{quasi}}(q) = \chi_{\mathcal{A}}^{k}(q)$. 
Moreover, $\chi_{\mathcal{A}}^{1}(t) = \chi_{\mathcal{A}(\mathbb{Q})}(t)$. 
\end{theorem}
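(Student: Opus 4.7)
My plan is to unwind $\chi_{\mathcal{A}}^{\mathrm{quasi}}(q)$ by inclusion-exclusion and analyze each resulting term via the Smith normal form. First, the cardinality of the complement expands as
\begin{align*}
\chi_{\mathcal{A}}^{\mathrm{quasi}}(q) = \sum_{\mathcal{B} \subseteq \mathcal{A}}(-1)^{\#\mathcal{B}} N_{\mathcal{B}}(q), \qquad N_{\mathcal{B}}(q) \coloneqq \#\Set{[u]_{q} \in (\mathbb{Z}/q\mathbb{Z})^{\ell} | [u]_{q}\alpha = 0 \text{ for every } \alpha \in \mathcal{B}},
\end{align*}
so the whole problem reduces to understanding $N_{\mathcal{B}}(q)$ for each finite subset $\mathcal{B} \subseteq \mathcal{A}$.

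Next, I would collect the vectors of $\mathcal{B}$ as the columns of an integer matrix $M_{\mathcal{B}}$ and fix a Smith normal form $M_{\mathcal{B}} = PDQ$, where $P, Q$ are invertible over $\mathbb{Z}$ and $D$ is a diagonal matrix carrying the elementary divisors $e_{1} \mid e_{2} \mid \dots \mid e_{r}$ with $r = r(\mathcal{B})$. Because $P$ and $Q$ reduce to invertible matrices modulo any $q$, the linear system $[u]_{q}M_{\mathcal{B}} = 0$ is equivalent, after a unimodular change of coordinates, to $[v]_{q}D = 0$, and a direct count of solutions in each coordinate gives
\begin{align*}
N_{\mathcal{B}}(q) = q^{\ell - r}\prod_{i=1}^{r}\gcd(e_{i}, q).
\end{align*}

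For the quasi-polynomial structure I would set $\rho_{\mathcal{A}}$ to be the least common multiple of the largest elementary divisors $e_{r(\mathcal{B})}$ as $\mathcal{B}$ ranges over all subsets of $\mathcal{A}$. Every $e_{i}$ appearing in the formulas above then divides $\rho_{\mathcal{A}}$, so $\gcd(e_{i}, q) = \gcd(e_{i}, \gcd(q, \rho_{\mathcal{A}}))$, and fixing $\gcd(q, \rho_{\mathcal{A}}) = k$ turns the inclusion-exclusion sum into a polynomial $\chi_{\mathcal{A}}^{k}(t) \in \mathbb{Z}[t]$ that represents $\chi_{\mathcal{A}}^{\mathrm{quasi}}(q)$ on that residue class, yielding simultaneously the quasi-polynomiality and the GCD-property. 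For the first constituent, $k = 1$ forces $\gcd(e_{i}, q) = 1$ for every $i$, so $N_{\mathcal{B}}(q) = q^{\ell - r(\mathcal{B})}$ and
\begin{align*}
\chi_{\mathcal{A}}^{1}(t) = \sum_{\mathcal{B} \subseteq \mathcal{A}}(-1)^{\#\mathcal{B}}t^{\ell - r(\mathcal{B})},
\end{align*}
which by Whitney's rank-polynomial identity coincides with $\chi_{\mathcal{A}(\mathbb{Q})}(t)$.

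The delicate step is the third one: one must verify that each factor $\gcd(e_{i}, q)$ depends on $q$ only through $\gcd(q, \rho_{\mathcal{A}})$, and that this dependence is uniform in $\mathcal{B}$. This is precisely why $\rho_{\mathcal{A}}$ has to be assembled from the elementary divisors of every submatrix $M_{\mathcal{B}}$, rather than from more global data such as the determinantal divisors of the full coefficient matrix of $\mathcal{A}$; once this uniformity is in place, the remainder of the argument is essentially bookkeeping.
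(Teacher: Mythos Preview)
The paper does not supply its own proof of this theorem; it is quoted as a result from Kamiya, Takemura, and Terao \cite{kamiya2008periodicity-joac} and then generalized (again by citation) to residually finite Dedekind domains in \Cref{charateristic quasi-polynomial}. Your outline is correct and in fact reproduces the original argument of that paper: inclusion--exclusion over subsets $\mathcal{B}\subseteq\mathcal{A}$, Smith normal form to evaluate each $N_{\mathcal{B}}(q)$ as $q^{\ell-r}\prod_{i=1}^{r}\gcd(e_i,q)$, and the observation that $\gcd(e_i,q)=\gcd(e_i,\gcd(q,\rho_{\mathcal{A}}))$ because every elementary divisor $e_i$ divides $\rho_{\mathcal{A}}$. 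The step you single out as delicate is handled correctly by your choice of $\rho_{\mathcal{A}}$ as the LCM of the top elementary divisors over all subsets, and the identification of $\chi_{\mathcal{A}}^{1}(t)$ with $\chi_{\mathcal{A}(\mathbb{Q})}(t)$ via Whitney's formula is standard.
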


The integer $\rho_{\mathcal{A}}$ is called the \textbf{LCM-period} (See \cite[p.~323]{kamiya2008periodicity-joac} for the definition). 
We call the polynomial $\chi_{\mathcal{A}}^{k}(t)$ the \textbf{$k$-constituent} of the characteristic quasi-polynomial $\chi_{\mathcal{A}}^{\mathrm{quasi}}(q)$. 

Every nonzero vector $a \in \mathbb{Z}^{\ell}$ defines the hypertori $H_{a, \mathbb{C}^{\times}}$ in the algebraic torus $\left(\mathbb{C}^{\times}\right)^{\ell}$ by 
\begin{align*}
H_{a, \mathbb{C}^{\times}} \coloneqq \Set{u = (u_{1}, \dots, u_{\ell}) \in \left(\mathbb{C}^{\times}\right)^{\ell} | u_{1}^{a_{1}} \cdots u_{\ell}^{a_{\ell}} = 1}. 
\end{align*}
We call the collection $\mathcal{A}(\mathbb{C}^{\times}) \coloneqq \{H_{\alpha_{1}, \mathbb{C}^{\times}}, \dots, H_{\alpha_{n}, \mathbb{C}^{\times}}\}$ the \textbf{toric arrangement} defined by $\mathcal{A}$. 
A connected component of the intersection of some members in $\mathcal{A}(\mathbb{C}^{\times})$ is called a \textbf{layer}. 
The layers form a poset $L(\mathcal{A}(\mathbb{C}^{\times}))$ ordered by reverse inclusion. 
For a positive integer $k$, let $L(\mathcal{A}(\mathbb{C}^{\times}))[k]$ denote the subposet of $L(\mathcal{A}(\mathbb{C}^{\times}))$ consisting of layers which contain a $k$-torsion element. 
Liu, Tran, and Yoshinaga proved the following theorem. 

\begin{theorem}[{\cite[Corollary 5.6]{liu2021$g$-tutte-imrn}} and {\cite[Corollary 4.8]{tran2019combinatorics-joctsa}}]\label{LTY}
Every constituent $\chi_{\mathcal{A}}^{k}(t)$ of the characteristic quasi-polynomial $\chi_{\mathcal{A}}^{\mathrm{quasi}}(q)$ coincides with the characteristic polynomial of $L(\mathcal{A}(\mathbb{C}^{\times}))[k]$. 
\end{theorem}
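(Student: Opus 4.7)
The approach is to convert the quasi-polynomial count into a Möbius sum over the intersection poset of the toric arrangement, isolate the layers that actually contribute, and recognize the result as the characteristic polynomial of the subposet $L(\mathcal{A}(\mathbb{C}^{\times}))[k]$.

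First I would fix a primitive $q$-th root of unity and identify $(\mathbb{Z}/q\mathbb{Z})^{\ell}$ with the $q$-torsion subgroup $T[q] \subseteq (\mathbb{C}^{\times})^{\ell}$, under which each $H_{\alpha_{i}, \mathbb{Z}/q\mathbb{Z}}$ becomes $H_{\alpha_{i}, \mathbb{C}^{\times}} \cap T[q]$. Inclusion-exclusion together with Möbius inversion on $L(\mathcal{A}(\mathbb{C}^{\times}))$ then yields
\begin{align*}
\chi_{\mathcal{A}}^{\mathrm{quasi}}(q) = \sum_{X \in L(\mathcal{A}(\mathbb{C}^{\times}))} \mu(\hat{0}, X) \cdot \#(X \cap T[q]).
\end{align*}
Since every layer $X$ is a translate of a subtorus of dimension $\dim X$, a short group-theoretic computation shows $\#(X \cap T[q]) = q^{\dim X}$ when $X$ meets $T[q]$ and $0$ otherwise, so only layers in $L(\mathcal{A}(\mathbb{C}^{\times}))[q]$ contribute to the sum.

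Next, I would show that $L(\mathcal{A}(\mathbb{C}^{\times}))[q]$ depends on $q$ only through $k = \gcd(q, \rho_{\mathcal{A}})$. Each layer is a connected component of the vanishing locus of monomial equations $u^{\beta} = 1$ for $\beta$ in some sublattice of $\mathbb{Z}^{\ell}$, and the existence of a $q$-torsion solution on a given component reduces, via the Smith normal form, to a divisibility condition on $q$ by the elementary divisors of that sublattice. The LCM-period $\rho_{\mathcal{A}}$ is a common multiple of all such divisors, so the condition collapses to one on $k$, and the displayed sum equals $\sum_{X \in L(\mathcal{A}(\mathbb{C}^{\times}))[k]} \mu(\hat{0}, X) \, q^{\dim X}$.

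The main obstacle is the final step: replacing the Möbius function $\mu$ of the full poset by the intrinsic Möbius function of the subposet $L(\mathcal{A}(\mathbb{C}^{\times}))[k]$ so that the sum is recognizably a characteristic polynomial of a poset. I would address this by showing that $L(\mathcal{A}(\mathbb{C}^{\times}))[k]$ is an order ideal of $L(\mathcal{A}(\mathbb{C}^{\times}))$: if $Y$ contains a $k$-torsion element $t$ and $Z \le Y$ in the poset (that is, $Z \supseteq Y$ as subsets of $(\mathbb{C}^{\times})^{\ell}$), then $t \in Z$ as well, so $Z \in L(\mathcal{A}(\mathbb{C}^{\times}))[k]$. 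Consequently every interval $[\hat{0}, X]$ with $X \in L(\mathcal{A}(\mathbb{C}^{\times}))[k]$ is contained in the subposet, which by the recursive definition of the Möbius function forces the full-poset and subposet Möbius values $\mu(\hat{0}, X)$ to agree. This identifies the $k$-constituent $\chi_{\mathcal{A}}^{k}(t)$ with the characteristic polynomial of $L(\mathcal{A}(\mathbb{C}^{\times}))[k]$.
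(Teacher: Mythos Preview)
Your argument is correct. The paper does not itself prove this theorem---it is quoted from the cited references---but the paper's proof of the Dedekind-domain generalization (\Cref{main theorem}(\ref{main theorem 4}) at $x=0$, supported by \Cref{torsion lemma}, \Cref{cardinality of Z[a]}, \Cref{layer decomposition}, and \Cref{KT Cor4.1}) follows essentially the same line you sketch: identify $(\mathbb{Z}/q\mathbb{Z})^{\ell}$ with the $q$-torsion points, count the torsion on each layer as $q^{\dim}$, and use periodicity in $\gcd(q,\rho_{\mathcal{A}})$.

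The one noteworthy difference is where M\"obius inversion is applied. You invert on the full poset $L(\mathcal{A}(\mathbb{C}^{\times}))$ and then invoke the order-ideal property of $L(\mathcal{A}(\mathbb{C}^{\times}))[k]$ to identify the two M\"obius functions on intervals $[\hat{0},X]$. The paper instead first uses the torsion lemma to discard layers with empty $\mathfrak{a}$-torsion and then performs M\"obius inversion \emph{within} the subposet $L[\mathfrak{a}]$ (\Cref{layer decomposition}), so the intrinsic M\"obius function of the subposet appears directly and no separate order-ideal step is needed. Both routes are valid; the paper's is slightly shorter, while yours makes explicit why restriction to the subposet is harmless.
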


Using \zcref{LTY}, Higashitani, Tran, and Yoshinaga proved the following theorem. 

\begin{theorem}[{\cite[Theorem 1.2]{higashitani2022period-imrn}}]\label{HTY}
The LCM-period $\rho_{\mathcal{A}}$ is the minimum period for $\chi_{\mathcal{A}}^{\mathrm{quasi}}(q)$. 
\end{theorem}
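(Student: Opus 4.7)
The plan is to combine \Cref{LTY} with the GCD-property of \Cref{KTT} to translate the minimality of the period into a statement about the subposets $L(\mathcal{A}(\mathbb{C}^{\times}))[k]$ as $k$ ranges over divisors of $\rho_{\mathcal{A}}$. By \Cref{KTT}, $\rho_{\mathcal{A}}$ is a period, so the minimum period $\rho^{*}$ satisfies $\rho^{*} \mid \rho_{\mathcal{A}}$. A standard reduction (using that any multiple of $\rho^{*}$ dividing $\rho_{\mathcal{A}}$ is again a period) shows it suffices to prove that $\rho_{\mathcal{A}}/p$ fails to be a period for every prime $p$ dividing $\rho_{\mathcal{A}}$.

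First, I would extract from a putative period $\rho$ a polynomial identity between constituents. Fix a residue $c$ modulo $\rho_{\mathcal{A}}$ and let $q$ range over positive integers with $q \equiv c \pmod{\rho_{\mathcal{A}}}$; every such $q$ has $\gcd(q,\rho_{\mathcal{A}}) = \gcd(c,\rho_{\mathcal{A}})$ and $\gcd(q+\rho,\rho_{\mathcal{A}}) = \gcd(c+\rho,\rho_{\mathcal{A}})$. Equating $\chi_{\mathcal{A}}^{\mathrm{quasi}}(q) = \chi_{\mathcal{A}}^{\mathrm{quasi}}(q+\rho)$ at infinitely many such $q$ yields the polynomial identity
\begin{align*}
\chi_{\mathcal{A}}^{\gcd(c,\rho_{\mathcal{A}})}(t) = \chi_{\mathcal{A}}^{\gcd(c+\rho,\rho_{\mathcal{A}})}(t+\rho).
\end{align*}
By \Cref{LTY}, each side is the characteristic polynomial of a subposet $L(\mathcal{A}(\mathbb{C}^{\times}))[k]$, so the task reduces to an identity of characteristic polynomials attached to two distinct subposets, one of which is evaluated at a shifted argument.

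The task then becomes producing, for each prime $p \mid \rho_{\mathcal{A}}$, a residue $c$ for which the resulting identity with $\rho = \rho_{\mathcal{A}}/p$ must fail. The strategy is to locate a layer $X \in L(\mathcal{A}(\mathbb{C}^{\times}))$ containing a $\rho_{\mathcal{A}}$-torsion point but no $\rho_{\mathcal{A}}/p$-torsion point, and to track how its inclusion alters the M\"obius sum defining the characteristic polynomial. The main obstacle is cancellation: individual layer contributions may cancel, so one cannot simply compare cardinalities of the two subposets. Overcoming this requires exploiting the definition of $\rho_{\mathcal{A}}$ as an LCM of elementary divisors of maximal-rank subcollections of $\mathcal{A}$, which pins down a distinguishing layer of extremal codimension whose M\"obius contribution cannot be absorbed by coarser strata, forcing the invariants of the two posets to genuinely differ and contradicting the identity above.
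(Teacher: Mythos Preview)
The paper does not prove this theorem; it is quoted from \cite{higashitani2022period-imrn} as background (and is later subsumed in the cited \Cref{charateristic quasi-polynomial}(2), also not proved here), so there is no in-paper argument to compare against.

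Independently of that, your proposal contains a genuine error. You assert that if $\rho$ is a period then $\chi_{\mathcal{A}}^{\mathrm{quasi}}(q) = \chi_{\mathcal{A}}^{\mathrm{quasi}}(q+\rho)$, and from this you extract the shifted identity $\chi_{\mathcal{A}}^{\gcd(c,\rho_{\mathcal{A}})}(t) = \chi_{\mathcal{A}}^{\gcd(c+\rho,\rho_{\mathcal{A}})}(t+\rho)$. This confuses the period of a quasi-polynomial with periodicity of its values. By definition, $\rho$ being a period only means that $q$ and $q+\rho$ are governed by the \emph{same} constituent polynomial $g$, so that $\chi_{\mathcal{A}}^{\mathrm{quasi}}(q)=g(q)$ and $\chi_{\mathcal{A}}^{\mathrm{quasi}}(q+\rho)=g(q+\rho)$; these two numbers are not equal in general. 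Matching this against the $\rho_{\mathcal{A}}$-description from \Cref{KTT} gives, for infinitely many $q$, that $\chi_{\mathcal{A}}^{\gcd(c,\rho_{\mathcal{A}})}(q)=g(q)$ and $\chi_{\mathcal{A}}^{\gcd(c+\rho,\rho_{\mathcal{A}})}(q+\rho)=g(q+\rho)$, whence both constituents equal $g$ as polynomials. The correct consequence is therefore the \emph{unshifted} identity
\[
\chi_{\mathcal{A}}^{\gcd(c,\rho_{\mathcal{A}})}(t) \;=\; \chi_{\mathcal{A}}^{\gcd(c+\rho,\rho_{\mathcal{A}})}(t).
\]

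Even after this correction, the remainder of your outline is a plan rather than a proof. You rightly identify cancellation among M\"obius contributions as the obstacle, but the assertion that a layer ``of extremal codimension'' has a contribution that ``cannot be absorbed by coarser strata'' is exactly the substantive point that needs to be established, and you have not indicated how.
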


\begin{remark}
In this article, we consider only central arrangements. 
Non-central cases are studied in \cite{kamiya2011periodicity-aoc}. 
In non-central cases, LCM-period may not be minimum.
We call this phenomenon \textbf{period collapse}.  
See \cite{higashitani2025characteristic, higashitani2025characteristic-1, higashitani2022period-imrn} for studies about period collapse. 
\end{remark}

\subsection{Characteristic quasi-polynomials of arrangements over residually finite Dedekind domains}

A Dedekind domain is called \textbf{residually finite} if the residue ring is finite for every nonzero ideal. 
For example, it is widely known that the ring of integers of an algebraic field is a residually finite Dedekind domain. 

Let $\mathcal{O}$ be a residually finite Dedekind domain and $I(\mathcal{O})$ the set of nonzero ideals. 
Let $K$ be the field of fractions of $\mathcal{O}$. 
Let $\mathcal{A} = \{\alpha_{1}, \dots, \alpha_{n}\}$ be a finite set consisting of nonzero vectors in $\mathcal{O}^{\ell}$. 
Given an $\mathcal{O}$-module $M$, define the \textbf{$M$-plexification} $\mathcal{A}(M)$ by $\mathcal{A}(M) \coloneqq \{H_{\alpha_{1}, M}, \dots, H_{\alpha_{n}, M}\}$, where for each $i \in \{1, \dots, n\}$, 
\begin{align*}
H_{\alpha_{i}, M} \coloneqq \Set{u \in M^{\ell} | u \alpha_{i} = 0}. 
\end{align*}
For a nonempty subset $J \subseteq \mathcal{A}$, put 
\begin{align*}
H_{J, M} \coloneq \bigcap_{\alpha \in J}H_{\alpha, M} \quad \text{and} \quad H_{\varnothing, M} \coloneqq M^{\ell}.  
\end{align*}

Let $\mathfrak{a} \in I(\mathcal{O})$. 
An element $m \in M$ is called an \textbf{$\mathfrak{a}$-torsion element} if $\mathfrak{a}m \coloneqq \Set{am \in M | a \in \mathfrak{a}} = \{0\}$.
For a subset $S \subseteq M$, let $S[\mathfrak{a}]$ denote the subset of $S$ consisting of $\mathfrak{a}$-torsion elements.

Let $R$ be a commutative ring. 
A \textbf{quasi-polynomial on $I(\mathcal{O})$} is a function $f \colon I(\mathcal{O}) \to R$ satisfying the following statement: 
There exist an ideal $\rho \in I(\mathcal{O})$ and polynomials $f^{\kappa}(t) \in R[t]$ for each $\kappa \mid \rho$ such that for any $\mathfrak{a} \in I(\mathcal{O})$, $\mathfrak{a}+\rho = \kappa$ implies $f(\mathfrak{a}) = f^{\kappa}(N(\mathfrak{a}))$, where $N(\mathfrak{a}) \coloneqq \# \left(\mathcal{O}/\mathfrak{a}\right)$ denotes the absolute norm of $\mathfrak{a}$. 
The ideal $\rho$ is called a \textbf{period} of $f$ and each $f^{\kappa}(t)$ is called a \textbf{$\kappa$-constituent} of $f$. 
A period that divides every period is called the \textbf{minimum period} of $f$. 
Note that if $f_{1}$ and $f_{2}$ are quasi-polynomials on $I(\mathcal{O})$ with period $\rho_{1}$ and $\rho_{2}$, then the sum $f_{1}+f_{2}$ is a quasi-polynomial on $I(\mathcal{O})$ with period $\mathrm{lcm}(\rho_{1}, \rho_{2})$ (See \cite[Proposition 2.2]{kuroda2024characteristic-ecaa}). 

Define the function $\chi_{\mathcal{A}}^{\mathrm{quasi}} \colon I(\mathcal{O}) \to \mathbb{Z}$ by 
\begin{align*}
\chi_{\mathcal{A}}^{\mathrm{quasi}}(\mathfrak{a}) \coloneqq \#\left(\left(\mathcal{O}/\mathfrak{a}\right)^{\ell}\setminus\bigcup_{i=1}^{n}H_{\alpha_{i},\mathcal{O}/\mathfrak{a}}\right). 
\end{align*}
We call $\chi_{\mathcal{A}}^{\mathrm{quasi}}$ the \textbf{characteristic quasi-polynomial} of $\mathcal{A}$ since the following theorem holds, which is a generalization of \zcref{KTT} and \zcref{HTY}. 
\begin{theorem}[{\cite[Theorem 3.1 and Theorem 5.1]{kuroda2024characteristic-ecaa}}]\label{charateristic quasi-polynomial}
The following statements hold. 
\begin{enumerate}[(1)]
\item The function $\chi_{\mathcal{A}}^{\mathrm{quasi}}$ is a quasi-polynomial on $I(\mathcal{O})$ with the \textbf{LCM-period} $\rho_{\mathcal{A}}$, where 
\begin{align*}
\rho_{\mathcal{A}} \coloneqq \operatorname{lcm}\Set{\operatorname{Ann}(\operatorname{tors}(\operatorname{Coker}G_{J})) | \varnothing \neq J \subseteq \mathcal{A} }
\end{align*}
and $G_{J}$ denotes the map from $\mathcal{O}^{\ell}$ to $\mathcal{O}^{J}$ defined by $u \mapsto (u\alpha)_{\alpha \in J}$. 
\item The LCM-period $\rho_{\mathcal{A}}$ is the minimum period of $\chi_{\mathcal{A}}^{\mathrm{quasi}}$.  
\item The $\mathcal{O}$-constituent of $\chi_{\mathcal{A}}^{\mathrm{quasi}}$ coincides with the characteristic polynomial of $\mathcal{A}(K)$. 
\end{enumerate}
\end{theorem}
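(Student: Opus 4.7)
The plan is to reduce all three parts to the structure of $\operatorname{Coker} G_{J}$ via the classification of finitely generated modules over the Dedekind domain $\mathcal{O}$. Starting from the inclusion--exclusion
\begin{align*}
\chi_{\mathcal{A}}^{\mathrm{quasi}}(\mathfrak{a}) = \sum_{J \subseteq \mathcal{A}} (-1)^{\#J}\, \#H_{J,\mathcal{O}/\mathfrak{a}},
\end{align*}
the task reduces to computing $\#H_{J,\mathcal{O}/\mathfrak{a}} = \#\Kernel(G_{J} \otimes_{\mathcal{O}}\mathcal{O}/\mathfrak{a})$. Applying $-\otimes_{\mathcal{O}}\mathcal{O}/\mathfrak{a}$ to the exact sequence
\begin{align*}
0 \to \Kernel G_{J} \to \mathcal{O}^{\ell} \xrightarrow{G_{J}} \mathcal{O}^{J} \to \operatorname{Coker} G_{J} \to 0,
\end{align*}
and using that $\Kernel G_{J}$ is projective as a submodule of the free $\mathcal{O}$-module $\mathcal{O}^{\ell}$, I would derive
\begin{align*}
\#H_{J,\mathcal{O}/\mathfrak{a}} = N(\mathfrak{a})^{\ell-r(J)} \cdot \#T_{J}[\mathfrak{a}],
\end{align*}
where $r(J) = \rk G_{J}$ and $T_{J} \coloneqq \operatorname{tors}(\operatorname{Coker} G_{J})$. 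The two key inputs are $\#(P \otimes_{\mathcal{O}} \mathcal{O}/\mathfrak{a}) = N(\mathfrak{a})^{\rk P}$ for any finitely generated projective $P$ (via $P \cong \mathcal{O}^{\rk P - 1} \oplus \mathfrak{b}$ and $\#(\mathfrak{b}/\mathfrak{a}\mathfrak{b}) = N(\mathfrak{a})$ for any nonzero fractional ideal $\mathfrak{b}$) and the identification $\operatorname{Tor}_{1}^{\mathcal{O}}(\mathcal{O}/\mathfrak{a}, T_{J}) \cong T_{J}[\mathfrak{a}]$, which follows by localizing at each prime and reducing to a DVR computation.

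With this formula in hand, parts (1) and (3) follow quickly. Because $\operatorname{Ann}(T_{J})$ divides $\rho_{\mathcal{A}}$ by the very definition of the LCM-period, we have $\mathfrak{a}+\operatorname{Ann}(T_{J}) = (\mathfrak{a}+\rho_{\mathcal{A}})+\operatorname{Ann}(T_{J})$, and therefore $\#T_{J}[\mathfrak{a}] = \#T_{J}[\mathfrak{a}+\operatorname{Ann}(T_{J})]$ depends only on $\kappa \coloneqq \mathfrak{a}+\rho_{\mathcal{A}}$. Hence on each residue class the counting function is a polynomial in $N(\mathfrak{a})$, establishing (1). For (3), the case $\kappa = \mathcal{O}$ makes $\mathfrak{a}$ coprime to every $\operatorname{Ann}(T_{J})$, which forces $\#T_{J}[\mathfrak{a}] = 1$ and hence $\#H_{J,\mathcal{O}/\mathfrak{a}} = N(\mathfrak{a})^{\ell-r(J)}$; Möbius inversion on $L(\mathcal{A}(K))$ then collapses the inclusion--exclusion to the classical formula $\chi_{\mathcal{A}(K)}(N(\mathfrak{a}))$.

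Part (2), the minimality of $\rho_{\mathcal{A}}$, is the main obstacle. I would adapt the combinatorial strategy of \cite{higashitani2022period-imrn} underlying \Cref{HTY}: in the integral case, minimality is deduced from the identification of each constituent with the characteristic polynomial of a subposet of layers of the toric arrangement (\Cref{LTY}), whereupon distinct divisors of $\rho_{\mathcal{A}}$ produce distinct subposets. A Dedekind analogue would replace $(\mathbb{C}^{\times})^{\ell}$ by $\Hom_{\mathcal{O}}(\mathcal{O}^{\ell}, K/\mathcal{O})$ and, for each $\kappa \mid \rho_{\mathcal{A}}$, build a subposet of intersections containing sufficiently many $\kappa$-torsion points, whose characteristic polynomial should match the $\kappa$-constituent. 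The hardest step is to prove that this layer interpretation genuinely separates the different constituents; this should follow from the Chinese remainder theorem in $\mathcal{O}$, which supplies enough ideals in each residue class to distinguish the contributions of the various $\operatorname{Ann}(T_{J})$.
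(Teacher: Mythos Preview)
This theorem is not proved in the present paper: it is quoted from \cite{kuroda2024characteristic-ecaa} as background, so there is no in-paper proof to compare against. That said, your outline for parts~(1) and~(3) is sound and, judging from how the present paper invokes \cite[Lemma~3.2]{kuroda2024characteristic-ecaa} in the proof of \Cref{main theorem}(\ref{main theorem 2}), it agrees with the approach of the cited source: the identity $\#H_{J,\mathcal{O}/\mathfrak{a}} = N(\mathfrak{a})^{\ell-r(J)} \cdot \#T_{J}[\mathfrak{a}]$ is exactly what makes each summand in the inclusion--exclusion a quasi-polynomial with period $\operatorname{Ann}(T_J) \mid \rho_{\mathcal{A}}$, and coprimality with $\rho_{\mathcal{A}}$ kills every torsion factor to recover $\chi_{\mathcal{A}(K)}$.

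For part~(2) you give only a sketch, which is honest but incomplete. The minimality argument in the Dedekind setting does proceed, as you suggest, by first identifying the $\kappa$-constituent with the characteristic polynomial of the torsion subposet $L(\mathcal{A}(K/\mathcal{O}))[\kappa]$ (this is the unlabelled theorem immediately following \Cref{charateristic quasi-polynomial}, citing \cite[Theorems~3.2, 4.3, 5.1]{kuroda2024characteristic-ecaa}) and then arguing that distinct divisors of $\rho_{\mathcal{A}}$ yield distinct characteristic polynomials. However, your final sentence understates the difficulty: the Chinese remainder theorem supplies ideals in each residue class, but separating the constituents requires a nondegeneracy argument for the M\"obius function on the layer posets (in the spirit of \cite{higashitani2022period-imrn}), not merely the existence of enough ideals. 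You have identified the right architecture but not the crucial step.
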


Let $\pi \colon K^{\ell} \to \left(K/\mathcal{O}\right)^{\ell}$ be the canonical projection. 
For any subset $J \subseteq \mathcal{A}$, $\pi(H_{J,K})$ is an $\mathcal{O}$-submodule of $H_{J, K/\mathcal{O}}$. 
We call an element of $H_{J, K/\mathcal{O}}/\pi(H_{J,K})$ a \textbf{layer} of $\mathcal{A}(K/\mathcal{O})$. 
Each layer behaves like a ``connected component" of $H_{J, K/\mathcal{O}}$ even if $K/\mathcal{O}$ does not have reasonable topology. 
Let $L(\mathcal{A}(K/\mathcal{O}))$ be the set consisting of all layers of $\mathcal{A}(K/\mathcal{O})$. 
The set $L(\mathcal{A}(K/\mathcal{O}))$ is a finite poset with the reverse inclusion as subsets in $\left(K/\mathcal{O}\right)^{\ell}$ and is called the \textbf{poset of layers} of $\mathcal{A}(K/\mathcal{O})$. 
Note that when $\mathcal{A}$ is defined over $\mathbb{Z}$, the poset of layers $L(\mathcal{A}(\mathbb{C}^{\times}))$ of the toric arrangement $\mathcal{A}(\mathbb{C}^{\times})$ is naturally isomorphic to $L(\mathcal{A}(\mathbb{Q}/\mathbb{Z}))$. 
In this sense, the $K/\mathcal{O}$-plexification $\mathcal{A}(K/\mathcal{O})$ can be regard as an analogy of a toric arrangement.

A layer $Z \subseteq \left(K/\mathcal{O}\right)^{\ell}$ may belong to two residue modules $H_{J_{1},K/\mathcal{O}}/\pi(H_{J_{1}, K})$ and $H_{J_{2},K/\mathcal{O}}/\pi(H_{J_{2}, K})$. 
In this case, $H_{J_{1}, K} = H_{J_{2}, K}$ holds (See \zcref{KT Lem 4.3}). 
Thus there exists a unique intersection $H_{Z} \in L(\mathcal{A}(K))$ such that $Z = \pi(u) + \pi(H_{Z})$ for any $\pi(u) \in Z$. 
We define the \textbf{dimension} of $Z$ as $\dim Z \coloneqq \dim_{K} H_{Z}$. 

A layer $Z = \pi(u)+\pi(H_{J, K}) \in H_{J,K/\mathcal{O}}/\pi(H_{J, K})$ is $\mathfrak{a}$-torsion element if and only if $\pi(au) \in \pi(H_{J, K}) = \pi(H_{Z})$ for any $a \in \mathfrak{a}$. 
Hence whether $Z$ is an $\mathfrak{a}$-torsion element is independent of the choice of $J$. 
As we will see later (\zcref{torsion lemma}), $Z$ is an $\mathfrak{a}$-torsion element if and only if $Z$ contains an $\mathfrak{a}$-torsion element in $\left(K/\mathcal{O}\right)^{\ell}$. 
The \textbf{$\mathfrak{a}$-torsion subposet} $L(\mathcal{A} (K / \mathcal{O}) )[\mathfrak{a}]$ is the subposet of $L(\mathcal{A} (K / \mathcal{O}))$ consisting of $\mathfrak{a}$-torsion elements. 
See \cite{kuroda2024characteristic-ecaa} for the details of the properties above. 

The first and third authors generalized \zcref{LTY} and \zcref{HTY} as follows. 

\begin{theorem}[{\cite[Theorem 3.2, Theorem 4.3, and Theorem 5.1]{kuroda2024characteristic-ecaa}}]
The $\kappa$-constituent $\chi_{\mathcal{A}}^{\kappa}(t)$ of $\chi_{\mathcal{A}}^{\mathrm{quasi}}(\mathfrak{a})$ coincides with the characteristic polynomial of $L(\mathcal{A} (K / \mathcal{O}) )[\kappa]$. 
\end{theorem}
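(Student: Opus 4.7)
My plan is to rewrite $\chi_{\mathcal{A}}^{\mathrm{quasi}}(\mathfrak{a})$ as a Möbius-type sum indexed by the $\mathfrak{a}$-torsion layer subposet, and then show that this subposet depends on $\mathfrak{a}$ only through $\kappa = \mathfrak{a} + \rho_{\mathcal{A}}$. First I would apply inclusion--exclusion to obtain
\begin{align*}
\chi_{\mathcal{A}}^{\mathrm{quasi}}(\mathfrak{a}) = \sum_{J \subseteq \mathcal{A}} (-1)^{\#J} \, \#H_{J, \mathcal{O}/\mathfrak{a}},
\end{align*}
and then use the canonical isomorphism $(\mathcal{O}/\mathfrak{a})^{\ell} \cong (K/\mathcal{O})^{\ell}[\mathfrak{a}]$ to rewrite each $\#H_{J, \mathcal{O}/\mathfrak{a}}$ as $\#H_{J, K/\mathcal{O}}[\mathfrak{a}]$. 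Decomposing $H_{J, K/\mathcal{O}}$ into cosets of $\pi(H_{J,K})$ (its layers), only the $\mathfrak{a}$-torsion layers $Z$ contribute, and each such $Z$ contributes $\#\pi(H_{J,K})[\mathfrak{a}]$ torsion elements. I would show this last quantity equals $N(\mathfrak{a})^{\dim H_{J,K}}$ by analyzing the projective $\mathcal{O}$-module $H_{J,\mathcal{O}} \coloneqq H_{J,K} \cap \mathcal{O}^{\ell}$.

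Collecting these contributions and regrouping by fixing a layer $Z$ with associated flat $H_{Z} \in L(\mathcal{A}(K))$, the signed count over subsets $J \subseteq \mathcal{A}$ satisfying $Z \subseteq H_{J, K/\mathcal{O}}$ collapses to the Möbius value $\mu(\hat{0}, H_{Z})$ via the usual inclusion--exclusion-to-Möbius passage on the intersection poset $L(\mathcal{A}(K))$, using that $Z \subseteq H_{J, K/\mathcal{O}}$ forces $H_{J,K} \supseteq H_{Z}$ as subsets of $K^{\ell}$. This yields
\begin{align*}
\chi_{\mathcal{A}}^{\mathrm{quasi}}(\mathfrak{a}) = \sum_{Z \in L(\mathcal{A}(K/\mathcal{O}))[\mathfrak{a}]} \mu(\hat{0}, Z) \, N(\mathfrak{a})^{\dim Z},
\end{align*}
which is exactly the characteristic polynomial of $L(\mathcal{A}(K/\mathcal{O}))[\mathfrak{a}]$ evaluated at $N(\mathfrak{a})$.

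To convert $\mathfrak{a}$-dependence into $\kappa$-dependence, I would verify that a layer $\pi(u) + \pi(H_{J,K})$ is $\mathfrak{a}$-torsion exactly when the class of $u$ in $\mathrm{tors}(\mathrm{Coker}\, G_{J})$ is annihilated by $\mathfrak{a}$, hence by $\mathfrak{a} + \mathrm{Ann}(\mathrm{tors}(\mathrm{Coker}\, G_{J}))$; since every such annihilator divides $\rho_{\mathcal{A}}$ by definition, this is equivalent to annihilation by $\mathfrak{a} + \rho_{\mathcal{A}} = \kappa$. Therefore $L(\mathcal{A}(K/\mathcal{O}))[\mathfrak{a}] = L(\mathcal{A}(K/\mathcal{O}))[\kappa]$, which identifies the $\kappa$-constituent of $\chi_{\mathcal{A}}^{\mathrm{quasi}}$ with the characteristic polynomial of $L(\mathcal{A}(K/\mathcal{O}))[\kappa]$.

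I expect the main obstacle to be the torsion-counting identity $\#\pi(H_{J,K})[\mathfrak{a}] = N(\mathfrak{a})^{\dim H_{J,K}}$: over a Dedekind domain $H_{J, \mathcal{O}}$ is projective but need not be free, so the clean identification $(K/\mathcal{O})^{r}[\mathfrak{a}] \cong (\mathcal{O}/\mathfrak{a})^{r}$ is unavailable. I plan to handle this by invoking the structure theorem to write $H_{J, \mathcal{O}} \cong \mathcal{O}^{r-1} \oplus \mathfrak{b}$ for some fractional ideal $\mathfrak{b}$ and observing that $(K/\mathfrak{b})[\mathfrak{a}]$ and $(K/\mathcal{O})[\mathfrak{a}]$ have equal cardinality (via multiplicativity of the absolute norm), so the Steinitz class does not affect the count.
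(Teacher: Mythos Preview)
Your overall strategy---pass to $(K/\mathcal{O})^{\ell}[\mathfrak{a}]$, count torsion layer by layer, and then invoke periodicity $L[\mathfrak{a}] = L[\kappa]$---matches the approach taken in the paper (which proves the generalized statement \Cref{main theorem}(\ref{main theorem 4}) and recovers this theorem at $x=0$). Your handling of the torsion count $\#\pi(H_{J,K})[\mathfrak{a}] = N(\mathfrak{a})^{\dim H_{J,K}}$ via the Steinitz structure theorem is a legitimate alternative to the paper's localization argument (\Cref{pid case}, \Cref{cardinality of Z[a]}), and your periodicity argument is essentially that of \Cref{KT Cor4.1}.

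There is, however, a genuine gap in your regrouping step. After decomposing $H_{J,K/\mathcal{O}}$ into its cosets modulo $\pi(H_{J,K})$ and swapping sums, the inner sum for a fixed layer $Z$ runs over those $J$ for which $Z$ is \emph{itself a coset} in $H_{J,K/\mathcal{O}}/\pi(H_{J,K})$: that is, $J \subseteq J_{Z}$ \emph{and} $H_{J,K} = H_{Z}$. This is strictly stronger than your stated condition $Z \subseteq H_{J,K/\mathcal{O}}$. More seriously, the ``usual inclusion--exclusion-to-M\"obius passage on $L(\mathcal{A}(K))$'' does not apply here: Whitney's identity gives $\mu_{L(\mathcal{A}(K))}(\hat{0},H_{Z}) = \sum_{J \subseteq \mathcal{A},\, H_{J,K}=H_{Z}}(-1)^{\#J}$, with no restriction to $J \subseteq J_{Z}$, and in general $J_{Z} \subsetneq \{\alpha \in \mathcal{A} : H_{\alpha,K} \supseteq H_{Z}\}$. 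The paper's own example already exhibits this: for $Z = \overline{(1/2,0)}$ one has $J_{Z} = \{\alpha_{1}\}$ while both $\alpha_{1}$ and $\alpha_{2}$ satisfy $H_{\alpha_{i},K} \supseteq H_{Z}$. What you actually need is an identity equating your restricted signed sum with $\mu_{L}(\hat{0},Z)$ on the \emph{layer} poset; this is plausible (and checks out in the example) but requires a separate cross-cut argument on $L$, not the standard one on $L(\mathcal{A}(K))$.

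The paper sidesteps this entirely: rather than passing through subsets $J \subseteq \mathcal{A}$, it decomposes $(\mathfrak{a}^{-1}/\mathcal{O})^{\ell} = \bigsqcup_{Y \in L[\mathfrak{a}]} M(Y)[\mathfrak{a}]$ into complements and obtains $\#M(\hat{0})[\mathfrak{a}] = \sum_{Y} \mu(\hat{0},Y)\,N(\mathfrak{a})^{\dim Y}$ by M\"obius inversion \emph{directly on} $L[\mathfrak{a}]$ from the identity $\#Z[\mathfrak{a}] = N(\mathfrak{a})^{\dim Z}$ (\Cref{cardinality of Z[a]}, \Cref{layer decomposition}). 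This produces the correct M\"obius function on the layer poset without any subset-to-flat translation.
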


\begin{remark}
The characteristic quasi-polynomials of Weyl arrangements associated with crystallographic root systems have specifically interesting properties (See \cite{kamiya2010characteristic-alsas, yoshinaga2018worpitzky-tmj}). 
By virtue of generalizing the characteristic quasi-polynomials for arrangement over residually finite Dedekind domains, we can consider the characteristic quasi-polynomials of arrangements associated non-crystallographic root systems and complex reflection groups. 
The characteristic quasi-polynomials for exceptional well-generated complex reflection groups are studied in \cite{kuroda2024characteristic-sldc}. 
\end{remark}

\subsection{Main results}
Let $\mathcal{O}$ be a residually finite Dedekind domain and $K$ its field of fractions. 
Let $\mathcal{A} = \{\alpha_{1}, \dots, \alpha_{n}\}$ be a finite set of nonzero vectors in $\mathcal{O}^{\ell}$. 
Two vectors $\alpha, \beta \in \mathcal{A}$ are called \textbf{parallel} if $\alpha = k\beta$ for some $k \in K^{\times}$. 
The parallelism is an equivalence relation on $\mathcal{A}$ and we call an equivalence class a \textbf{parallel class}. 
Let $\mathcal{P}(\mathcal{A})$ be the set of parallel classes of $\mathcal{A}$ and put $p(\mathcal{A}) \coloneqq \#\mathcal{P}(\mathcal{A})$.

Let $\mathfrak{a} \in I(\mathcal{O})$. 
Given a function $f \in \left(\mathcal{O}/\mathfrak{a}\right)^{\mathcal{A}}$, we define its \textbf{weight} $\wt(f)$ to be the number of parallel classes $C$ such that $f(\alpha) \neq 0$ for any $\alpha \in C$. 
We will naturally identify $\left(\mathcal{O}/\mathfrak{a}\right)^{\mathcal{A}}$ with $\left(\mathcal{O}/\mathfrak{a}\right)^{n}$ and consider the induced weight. 
If each parallel class is a singleton, then the weight coincides with the Hamming weight on $\left(\mathcal{O}/\mathfrak{a}\right)^{n}$.

Define the matrix $G$ by $G \coloneqq (\alpha_{1} \ \cdots \ \alpha_{n}) \in \Mat_{\ell \times n}\left(\mathcal{O}\right)$. 
For any $\mathfrak{a} \in I(\mathcal{O})$, the matrix $G$ induces the homomorphism $G_{\mathfrak{a}} \colon \left(\mathcal{O}/\mathfrak{a}\right)^{\ell} \to \left(\mathcal{O}/\mathfrak{a}\right)^{n}$ by $[u]_{\mathfrak{a}} \mapsto [u]_{\mathfrak{a}}G$, where $[u]_{\mathfrak{a}}$ denotes the residue class of a row vector $u \in \mathcal{O}^{\ell}$.

\begin{definition}
Define the \textbf{weight enumerator} of $\Image G_{\mathfrak{a}}$ by 
\begin{align*}
W_{\Image G_{\mathfrak{a}}}(x,y) \coloneqq \sum_{c \in \Image G_{\mathfrak{a}}}x^{p(\mathcal{A})-\wt(c)}y^{\wt(c)}.  
\end{align*}
\end{definition}

\begin{definition}
Define the map $\overline{\chi}_{\mathcal{A}}^{\mathrm{quasi}} \colon I(\mathcal{O}) \to \mathbb{Z}[x]$ by 
\begin{align*}
\overline{\chi}_{\mathcal{A}}^{\mathrm{quasi}}(\mathfrak{a}, x) \coloneqq \sum_{i=0}^{p(\mathcal{A})}B_{i}(\mathfrak{a})x^{i}, 
\end{align*}
where 
\begin{align*}
B_{i}(\mathfrak{a}) \coloneqq \# \Set{[u]_{\mathfrak{a}} \in \left(\mathcal{O}/\mathfrak{a}\right)^{\ell} | \wt([u]_{\mathfrak{a}} G) = p(\mathcal{A})-i}. 
\end{align*}
\end{definition}
The $\overline{\chi}_{\mathcal{A}}^{\mathrm{quasi}}$ is a quasi-polynomial on $I(\mathcal{O})$ and every constituent of it is the coboundary polynomial of the torsion subposet of the poset of layers (see \zcref{main theorem}). 
Thus we call it \textbf{coboundary quasi-polynomial} of $\mathcal{A}$.  
The following proposition is an analogy of \zcref{Greene}.

\begin{proposition}\label{coboundary quasi-polynomial and weight enumerator}
For any $\mathfrak{a} \in I(\mathcal{O})$, 
\begin{align*}
\overline{\chi}_{\mathcal{A}}^{\mathrm{quasi}}(\mathfrak{a}, x)
= \# \Kernel G_{\mathfrak{a}} \cdot W_{\Image G_{\mathfrak{a}}}(x,1). 
\end{align*}
\end{proposition}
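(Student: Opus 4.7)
The plan is to recognize the left-hand side as a sum over $(\mathcal{O}/\mathfrak{a})^{\ell}$ of a monomial in $x$ depending on the weight of $[u]_{\mathfrak{a}}G$, and then group this sum by the value of $G_{\mathfrak{a}}$, using that $G_{\mathfrak{a}}$ is an $\mathcal{O}$-module homomorphism between finite modules.

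More concretely, I would first absorb the coefficients $B_{i}(\mathfrak{a})$ into a single sum over vectors:
\begin{align*}
\overline{\chi}_{\mathcal{A}}^{\mathrm{quasi}}(\mathfrak{a}, x)
= \sum_{i=0}^{p(\mathcal{A})}\#\Set{[u]_{\mathfrak{a}} \in (\mathcal{O}/\mathfrak{a})^{\ell} | \wt([u]_{\mathfrak{a}}G) = p(\mathcal{A})-i} x^{i}
= \sum_{[u]_{\mathfrak{a}} \in (\mathcal{O}/\mathfrak{a})^{\ell}} x^{p(\mathcal{A})-\wt([u]_{\mathfrak{a}}G)},
\end{align*}
where the last equality follows by reindexing $i = p(\mathcal{A})-\wt([u]_{\mathfrak{a}}G)$ for each $[u]_{\mathfrak{a}}$.

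Next, I would partition the domain according to fibers of $G_{\mathfrak{a}}$. Since $G_{\mathfrak{a}}$ is a homomorphism of finite $\mathcal{O}/\mathfrak{a}$-modules, every nonempty fiber has cardinality $\#\Kernel G_{\mathfrak{a}}$, and the exponent $p(\mathcal{A})-\wt([u]_{\mathfrak{a}}G)$ depends only on the image $[u]_{\mathfrak{a}}G$. Grouping the sum by $c \in \Image G_{\mathfrak{a}}$ therefore yields
\begin{align*}
\sum_{[u]_{\mathfrak{a}}} x^{p(\mathcal{A})-\wt([u]_{\mathfrak{a}}G)}
= \#\Kernel G_{\mathfrak{a}} \cdot \sum_{c \in \Image G_{\mathfrak{a}}} x^{p(\mathcal{A})-\wt(c)}.
\end{align*}

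Finally, I would recognize the remaining sum as the specialization $y=1$ of the weight enumerator, namely $\sum_{c \in \Image G_{\mathfrak{a}}} x^{p(\mathcal{A})-\wt(c)}\cdot 1^{\wt(c)} = W_{\Image G_{\mathfrak{a}}}(x,1)$, completing the proof. There is no genuine obstacle: the argument is the standard ``fiber-counting'' identity that already underlies Greene's theorem (\Cref{Greene}), and the only point meriting a brief verification is that the weight $\wt$, defined via parallel classes on $\mathcal{A}$, is a well-defined function on $(\mathcal{O}/\mathfrak{a})^{\mathcal{A}} \cong (\mathcal{O}/\mathfrak{a})^{n}$ so that the reindexing in the first step is legitimate.
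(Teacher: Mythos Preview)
Your proof is correct and follows essentially the same approach as the paper: both arguments use that each fiber of the homomorphism $G_{\mathfrak{a}}$ has cardinality $\#\Kernel G_{\mathfrak{a}}$ (equivalently, the first isomorphism theorem) to pass from a sum over $(\mathcal{O}/\mathfrak{a})^{\ell}$ to one over $\Image G_{\mathfrak{a}}$. The only cosmetic difference is the order of operations---the paper first rewrites each coefficient $B_{i}(\mathfrak{a})$ via the fiber count and then sums, whereas you first collapse the coefficients into a single sum over $[u]_{\mathfrak{a}}$ and then group by image---but the underlying identity is identical.
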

\begin{proof}
The isomorphism $\left(\mathcal{O}/\mathfrak{a}\right)^{\ell}/\Kernel G_{\mathfrak{a}} \simeq \Image G_{\mathfrak{a}}$ implies 
\begin{align*}
B_{i}(\mathfrak{a}) = \# \Kernel G_{\mathfrak{a}} \cdot \# \Set{c \in \Image G_{\mathfrak{a}} | p(\mathcal{A}) - \wt(c) = i}. 
\end{align*}
Therefore 
\begin{align*}
\# \Kernel G_{\mathfrak{a}} \cdot W_{\Image G_{\mathfrak{a}}}(x,1) 
&= \# \Kernel G_{\mathfrak{a}} \sum_{c \in \Image G_{\mathfrak{a}}} x^{p(\mathcal{A})-\wt(c)} \\
&= \# \Kernel G_{\mathfrak{a}} \sum_{i=0}^{p(\mathcal{A})} \#\Set{c \in \Image G_{\mathfrak{a}} | p(\mathcal{A}) - \wt(c) = i}x^{i} \\
&= \sum_{i=0}^{p(\mathcal{A})}B_{i}(\mathfrak{a})x^{i} 
= \overline{\chi}_{\mathcal{A}}^{\mathrm{quasi}}(\mathfrak{a}, x). 
\end{align*}
\end{proof}

The main results of this article are as follows. 
\begin{theorem}\label{main theorem}
Let $\rho_{\mathcal{A}}$ be the LCM-period. 
The following statements hold. 
\begin{enumerate}[(1)]
\item\label{main theorem 1} $\overline{\chi}_{\mathcal{A}}^{\mathrm{quasi}}(\mathfrak{a}, 0) = \chi_{\mathcal{A}}^{\mathrm{quasi}}(\mathfrak{a})$ for any $\mathfrak{a} \in I(\mathcal{O})$. 
\item\label{main theorem 2} Every $B_{i}(\mathfrak{a}) $ is a quasi-polynomial in $\mathfrak{a}$ with period $\rho_{\mathcal{A}}$. 
\item\label{main theorem 3} $\overline{\chi}_{\mathcal{A}}^{\mathrm{quasi}}(\mathfrak{a},x)$ is a quasi-polynomial in $\mathfrak{a}$ with minimum period $\rho_{\mathcal{A}}$. 
\item\label{main theorem 4} The $\kappa$-constituent $\overline{\chi}_{\mathcal{A}}^{\kappa}(t,x)$ of $\overline{\chi}_{\mathcal{A}}^{\mathrm{quasi}}(\mathfrak{a},x)$ coincides with the coboundary polynomial of $L(\mathcal{A} (K / \mathcal{O}) )[\kappa]$. 
Namely 
\begin{align*}
\overline{\chi}_{\mathcal{A}}^{\kappa}(t,x) = \sum_{Z \in L(\mathcal{A}(K/\mathcal{O}))[\kappa]}\left(\sum_{\substack{Y \in L(\mathcal{A}(K/\mathcal{O}))[\kappa] \\ Y \geq Z}}\mu(Z,Y)t^{\dim Y}\right)x^{a(Z)}, 
\end{align*}
where $a(Z)$ denotes the number of atoms (minimal elements in $L(\mathcal{A}(K/\mathcal{O})) \setminus \{\hat{0}\}$) below $Z$. 
\item\label{main theorem 5} The $\mathcal{O}$-constituent of $\overline{\chi}_{\mathcal{A}}^{\mathrm{quasi}}(\mathfrak{a},x)$ coincides with $\overline{\chi}_{\mathcal{A}(K)}(t,x)$. 
\end{enumerate}
\end{theorem}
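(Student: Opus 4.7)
For Part (1), $\overline{\chi}^{\mathrm{quasi}}_{\mathcal{A}}(\mathfrak{a}, 0) = B_{0}(\mathfrak{a})$ counts those $[u]_{\mathfrak{a}}$ with $\wt([u]_{\mathfrak{a}} G) = p(\mathcal{A})$, which is equivalent to $[u]_{\mathfrak{a}}\alpha \neq 0$ for every $\alpha \in \mathcal{A}$; this is exactly the definition of $\chi^{\mathrm{quasi}}_{\mathcal{A}}(\mathfrak{a})$. For Part (2), set $U_{C} := \bigcup_{\alpha \in C} H_{\alpha, \mathcal{O}/\mathfrak{a}}$ for each parallel class $C$; the exponent $p(\mathcal{A}) - \wt([u]_{\mathfrak{a}} G)$ equals $\#\{C \in \mathcal{P}(\mathcal{A}) : [u]_{\mathfrak{a}} \in U_{C}\}$, so the identity $x^{\mathbf{1}_{A}} = 1 + (x-1)\mathbf{1}_{A}$ applied over parallel classes yields
\begin{align*}
\overline{\chi}^{\mathrm{quasi}}_{\mathcal{A}}(\mathfrak{a}, x) = \sum_{\mathcal{D} \subseteq \mathcal{P}(\mathcal{A})}(x-1)^{|\mathcal{D}|}\,\#\bigcap_{C \in \mathcal{D}}U_{C}.
\end{align*}
A second, complementary inclusion-exclusion expresses $\#\bigcap_{C \in \mathcal{D}} U_{C}$ as a signed sum of $\chi^{\mathrm{quasi}}_{\mathcal{A}_{\mathcal{E}}}(\mathfrak{a})$ over $\mathcal{E} \subseteq \mathcal{D}$, where $\mathcal{A}_{\mathcal{E}} := \bigcup_{C \in \mathcal{E}}C$. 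Since $\rho_{\mathcal{A}_{\mathcal{E}}} \mid \rho_{\mathcal{A}}$, each summand is a quasi-polynomial in $\mathfrak{a}$ with period $\rho_{\mathcal{A}}$, so every $B_{i}(\mathfrak{a})$ is one as well. Part (3) then follows: any period $\tau$ of $\overline{\chi}^{\mathrm{quasi}}_{\mathcal{A}}$ is, by Part (1), a period of $\chi^{\mathrm{quasi}}_{\mathcal{A}}$, forcing $\rho_{\mathcal{A}} \mid \tau$ by \Cref{charateristic quasi-polynomial}\,(2), while $\rho_{\mathcal{A}}$ is a period by Part (2).

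For Part (4), the plan is to stratify the sum $\overline{\chi}^{\mathrm{quasi}}_{\mathcal{A}}(\mathfrak{a}, x) = \sum_{[u]_{\mathfrak{a}}}x^{p(\mathcal{A}) - \wt([u]_{\mathfrak{a}} G)}$ by layer. Using the canonical $\mathcal{O}$-isomorphism $(\mathcal{O}/\mathfrak{a})^{\ell} \cong (K/\mathcal{O})[\mathfrak{a}]^{\ell}$, each $[u]_{\mathfrak{a}}$ corresponds to an $\mathfrak{a}$-torsion element $u' \in (K/\mathcal{O})^{\ell}$, and $[u]_{\mathfrak{a}}\alpha = 0$ iff $u' \in H_{\alpha, K/\mathcal{O}}$. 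Let $Z(u')$ be the unique layer containing $u'$; by \Cref{torsion lemma}, $Z(u') \in L(\mathcal{A}(K/\mathcal{O}))[\kappa]$ whenever $\mathfrak{a} + \rho_{\mathcal{A}} = \kappa$. The argument then rests on two identifications. First, the atoms of $L(\mathcal{A}(K/\mathcal{O}))$ below $Z(u')$ are exactly the cosets $u' + \pi(H_{\alpha, K})$ for $\alpha$ with $u' \in H_{\alpha, K/\mathcal{O}}$; since parallel $\alpha, \beta$ give $\pi(H_{\alpha, K}) = \pi(H_{\beta, K})$, these atoms are in bijection with the parallel classes $C$ satisfying $u' \in \bigcup_{\alpha \in C}H_{\alpha, K/\mathcal{O}}$, whence $p(\mathcal{A}) - \wt([u]_{\mathfrak{a}} G) = a(Z(u'))$. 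Second, Möbius inversion on $L(\mathcal{A}(K/\mathcal{O}))[\kappa]$, combined with the count $|Y \cap (K/\mathcal{O})[\mathfrak{a}]^{\ell}| = N(\mathfrak{a})^{\dim Y}$ for $Y \in L(\mathcal{A}(K/\mathcal{O}))[\kappa]$ (as employed in \cite{kuroda2024characteristic-ecaa} to derive the analogous formula for $\chi^{\mathrm{quasi}}_{\mathcal{A}}$), gives
\begin{align*}
\#\{u' \in (K/\mathcal{O})[\mathfrak{a}]^{\ell} : Z(u') = Z\} = \sum_{\substack{Y \in L(\mathcal{A}(K/\mathcal{O}))[\kappa] \\ Y \geq Z}}\mu(Z,Y)\,N(\mathfrak{a})^{\dim Y}.
\end{align*}
Grouping the sum over $u'$ by $Z = Z(u')$ and substituting these two identifications produces the claimed formula for $\overline{\chi}^{\kappa}_{\mathcal{A}}(t, x)$. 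I expect the atom-count identification to be the main obstacle, since it requires pinning down exactly when distinct members of $\mathcal{A}$ contribute the same atom to $L(\mathcal{A}(K/\mathcal{O}))$.

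Part (5) is the specialization of Part (4) to $\kappa = \mathcal{O}$. The layers $Z \in L(\mathcal{A}(K/\mathcal{O}))[\mathcal{O}]$ satisfy $\mathcal{O} Z = 0$, hence are the zero cosets $\pi(H_{J, K}) \in H_{J, K/\mathcal{O}}/\pi(H_{J, K})$, giving a dimension-preserving poset isomorphism $Z \mapsto H_{Z}$ from $L(\mathcal{A}(K/\mathcal{O}))[\mathcal{O}]$ onto $L(\mathcal{A}(K))$. Under this bijection, the atoms of $L(\mathcal{A}(K/\mathcal{O}))$ below such $Z$ are precisely the cosets $\pi(H_{\alpha, K})$ with $H_{\alpha, K} \supseteq H_{Z}$---one per parallel class, since parallel $\alpha, \beta$ give $H_{\alpha, K} = H_{\beta, K}$---so $a(Z)$ in the formula of Part (4) matches the classical atom count $a(H_{Z})$ in $L(\mathcal{A}(K))$. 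Hence the $\mathcal{O}$-constituent obtained from Part (4) coincides with $\overline{\chi}_{\mathcal{A}(K)}(t, x)$.
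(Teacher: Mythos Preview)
Your plan is essentially the paper's own: stratify the sum over $(\mathcal{O}/\mathfrak{a})^{\ell}\cong(\mathfrak{a}^{-1}/\mathcal{O})^{\ell}$ by layer, identify the exponent with $a(Z)$, and count each stratum by M\"obius inversion. The paper packages the two identifications you single out as \Cref{a(Z)} and \Cref{layer decomposition}, and your Part~(2) reduction to the $\chi^{\mathrm{quasi}}_{\mathcal{A}_{\mathcal{E}}}$ is a harmless variant of the paper's direct use of $\#H_{K,\mathcal{O}/\mathfrak{a}}$.

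Two points to tighten. First, the count $\#\bigl(Y\cap (K/\mathcal{O})[\mathfrak{a}]^{\ell}\bigr)=N(\mathfrak{a})^{\dim Y}$ is not something you can simply cite from \cite{kuroda2024characteristic-ecaa}: the paper establishes it here as \Cref{cardinality of Z[a]} via a localization argument reducing to the PID case (\Cref{pid case}), and \Cref{torsion lemma}, which you also invoke, was explicitly left open in that reference. So your second identification needs this new input, not just a citation. Second, ``the unique layer containing $u'$'' is imprecise---a point lies in many layers; what you want is the inclusion-minimal one, i.e., the unique $Z$ with $u'\in M(Z)$, and only for such $u'$ does the equality $p(\mathcal{A})-\wt = a(Z)$ hold (this is exactly the content of \Cref{a(Z)}(\ref{a(Z) 2}), whose proof also supplies the injectivity you need to conclude that non-parallel $\alpha,\beta$ yield distinct atoms).
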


\begin{remark}
When $\mathcal{O}=\mathbb{Z}$, we can define the coboundary quasi-polynomial in positive integers $q$ by setting $\overline{\chi}_{\mathcal{A}}^{\mathrm{quasi}}(q,x) \coloneqq \overline{\chi}_{\mathcal{A}}^{\mathrm{quasi}}(q\mathbb{Z},x)$. 
Then $\overline{\chi}_{\mathcal{A}}^{\mathrm{quasi}}(q,x)$ is a quasi-polynomial in $q$ in the usual sense. 
Note that the equivalent notion that is called the \textbf{Tutte quasi-polynomial} was introduced in \cite{branden2014multivariate-totams}. 
\end{remark}

\begin{example}[See also {\cite[Example 3.2 and Example 4.2]{kuroda2024characteristic-ecaa}}]
Let $\mathcal{O} = \mathbb{Z}[\sqrt{-5}]$ and $K = \mathbb{Q}(\sqrt{-5})$. Let $\mathfrak{p}$ and $\mathfrak{q}$ be prime ideals of $\mathcal{O}$ defined by $\mathfrak{p} = \langle 2, 1-\sqrt{-5}\rangle, \mathfrak{q} = \langle 3, 1+\sqrt{-5}\rangle$. 
Let us consider the case 
\begin{align*}
\mathcal{A} = \{\alpha_{1}, \alpha_{2}\}, \text{ where } 
\alpha_{1} = \begin{pmatrix}
2 \\ 1-\sqrt{-5}
\end{pmatrix}, \
\alpha_{2} = \begin{pmatrix}
1+\sqrt{-5} \\ 3
\end{pmatrix}. 
\end{align*}
The LCM-period $\rho_{\mathcal{A}}$ of $\mathcal{A}$ equals $\mathfrak{pq} = \langle 1+\sqrt{-5} \rangle$ (See \cite[Example 3.2]{kuroda2024characteristic-ecaa} for details).  
Note that $\alpha_{1}$ and $\alpha_{2}$ are parallel since $3\alpha_{1} = (1-\sqrt{-5})\alpha_{2}$. 
Hence $H \coloneqq H_{\alpha_{1}, K} = H_{\alpha_{2}, K} = H_{\{\alpha_{1}, \alpha_{2}\}, K} = \langle (-3,1+\sqrt{-5}) \rangle_{K}$. 

We show that 
\begin{align*}
\pi(H) &= \Set{ \pi(a,b) \in \left(K/\mathcal{O}\right)^{2} | 2a+(1-\sqrt{-5})b \in \mathfrak{p}} \\
&= \Set{ \pi(a,b) \in \left(K/\mathcal{O}\right)^{2} | (1+\sqrt{-5})a+3b \in \mathfrak{q}}. 
\end{align*}
We prove only the first equality since the proof of the second equality is similar. 
Suppose that $\pi(a,b) \in \pi(H)$, where $(a,b) \in H$. 
Then $2a + (1-\sqrt{-5})b = 0 \in \mathfrak{p}$. 
Hence $\pi(a,b)$ belongs to the right hand side. 
Conversely, suppose that $\pi(a,b)$ belongs to the right hand side. 
Then there exists $(\alpha, \beta) \in \mathcal{O}^{2}$ such that $2a + (1-\sqrt{-5})b = 2\alpha + (1-\sqrt{-5})\beta$. 
Then $(a-\alpha, b-\beta) \in H$, which implies that $\pi(a,b) = \pi(a-\alpha, b-\beta) \in \pi(H)$. 
Thus the fist equality holds. 

The following hold. 
\begin{align*}
H_{\varnothing, K/\mathcal{O}}/\pi(H_{\varnothing,K}) &= \Set{\left(K/\mathcal{O}\right)^{2}}, \\
H_{\alpha_{1}, K/\mathcal{O}}/\pi(H_{\alpha_{1},K}) &= \Set{\overline{(0,0)}, \ \overline{(1/2, 0)}}, \\
H_{\alpha_{2}, K/\mathcal{O}}/\pi(H_{\alpha_{2},K}) &= \Set{\overline{(0,0)}, \ \overline{(0, 1/3)}, \ \overline{(0, 2/3)}},  \\
H_{\{\alpha_{1}, \alpha_{2}\}, K/\mathcal{O}}/\pi(H_{\{\alpha_{1}, \alpha_{2}\},K}) &= \Set{\overline{(0,0)}}, 
\end{align*}
where $\overline{(a,b)}$ denotes  $\pi(a,b) + \pi(H)$. 
We prove only the second equality. 
Suppose that $\overline{(a,b)}$ belongs to the left hand side. 
Let $\alpha = (1-\sqrt{-5})b/6 \in K$. 
Then $(-(1-\sqrt{-5})b/2, b) = (-3\alpha, (1+\sqrt{-5})\alpha) \in H$. 
Hence 
\begin{align*}
\overline{(a,b)} = \overline{(a,b)} - \overline{\left(-\frac{1-\sqrt{-5}}{2}b, b\right)}
= \overline{\left(a+\frac{1-\sqrt{-5}}{2}b, 0\right)}. 
\end{align*}
Hence without loss of generality we can assume that $b=0$. 
Since $\pi(a,0) \in H_{\alpha_{1}, K/\mathcal{O}}$, $2a \in \mathcal{O}$. 
Therefore there exist $\alpha, \beta \in \mathbb{Z}$ such that $2a = \alpha + \beta\sqrt{-5}$. 
If $\alpha+\beta$ is even, then $2a = \alpha + \beta -(1-\sqrt{-5})\beta \in \mathfrak{p}$. 
Hence $\pi(a,0) \in \pi(H)$, which implies that $\overline{(a,0)} = \overline{(0,0)}$. 
If $\alpha+\beta$ is odd, then $2a-1 = \alpha + \beta - 1 -(1-\sqrt{-5})\beta \in \mathfrak{p}$. 
Hence $\pi(a-1/2,0) \in \pi(H)$, which implies that $\overline{(a,0)} = \overline{(1/2,0)}$. 
Thus the second equality holds. 

The Hasse diagram of the poset of layers $L(\mathcal{A}(K/\mathcal{O}))$ is as follows. 
\begin{center}
\begin{tikzpicture}
  \node (0) at (-3,0) {$\overline{(0,0)}$};
  \node (11) at (-1,0) {$\overline{(1/2,0)}$};
  \node (21) at (1,0) {$\overline{(0,1/3)}$};
  \node (22) at (3,0) {$\overline{(0,2/3)}$};
  \node (empty) at (0,-2) {$\left(K/\mathcal{O}\right)^{2}$};
  \draw (empty) -- (0);
  \draw (empty) -- (11);
  \draw (empty) -- (21);
  \draw (empty) -- (22);
\end{tikzpicture}
\end{center}

For each $\kappa \mid \rho_{\mathcal{A}}$, the $\kappa$-torsion subposet is as follows. 
\begin{align*}
L(\mathcal{A}(K/\mathcal{O}))[\mathcal{O}] &=  \Set{\left(K/\mathcal{O}\right)^{2}, \overline{(0,0)}}, \\
L(\mathcal{A}(K/\mathcal{O}))[\mathfrak{p}] &= \Set{\left(K/\mathcal{O}\right)^{2}, \overline{(0,0)}, \overline{(1/2,0)}}, \\
L(\mathcal{A}(K/\mathcal{O}))[\mathfrak{q}] &= \Set{\left(K/\mathcal{O}\right)^{2}, \overline{(0,0)}, \overline{(0,1/3)}, \overline{(0,2/3)}}, \\
L(\mathcal{A}(K/\mathcal{O}))[\mathfrak{pq}] &= L(\mathcal{A}(K/\mathcal{O})). 
\end{align*}

By \zcref{main theorem}(\ref{main theorem 4}), the constituents of the coboundary quasi-polynomial $\overline{\chi}_{\mathcal{A}}^{\mathrm{quasi}}(\mathfrak{a},x)$ are as follows. 
\begin{align*}
\overline{\chi}_{\mathcal{A}}^{\mathcal{O}}(t,x) &= (t^{2}-t) + tx, \\
\overline{\chi}_{\mathcal{A}}^{\mathfrak{p}}(t,x) &= (t^{2}-2t) + 2tx, \\
\overline{\chi}_{\mathcal{A}}^{\mathfrak{q}}(t,x) &= (t^{2}-3t) + 3tx, \\
\overline{\chi}_{\mathcal{A}}^{\mathfrak{pq}}(t,x) &= (t^{2}-4t) + 4tx. 
\end{align*}

\end{example}

The organization of this paper is as follows. 
In \zcref{sec: preliminaries}, we prove six lemmas for the main results after a brief review of layers for arrangements over residually finite Dedekind domains. 
In \zcref{sec:proof}, we give a proof of \zcref{main theorem}. 
In \zcref{sec:question}, we raise a question about the MacWilliams identities and the dual matroid over a ring introduced by Fink and Moci \cite{fink2016matroids-jotems}.

\section{Preliminaries}\label{sec: preliminaries}
To ease notation we write $L = L(\mathcal{A}(K/\mathcal{O}))$ and $L[\mathfrak{a}] = L(\mathcal{A}(K/\mathcal{O}))[\mathfrak{a}]$ in this section. 

\subsection{Brief review of layers for arrangements over residually finite Dedekind domains}

We give a brief review of the known properties of layers that are used in this article.
Note that \zcref{KT Prop 4.2} and \zcref{KT Lem 4.3} are intuitively clear for toric arrangements because of its topological property. 

\begin{proposition}[{\cite[Proposition 4.2]{kuroda2024characteristic-ecaa}}]\label{KT Prop 4.2}
Let $Z \in L$ be a layer and suppose that $Z \subseteq H_{J,K/\mathcal{O}}$ for some $J \subseteq \mathcal{A}$. 
Then there exists a layer $W \in H_{J,K/\mathcal{O}}/\pi(H_{J,K})$ such that $Z \subseteq W$. 
\end{proposition}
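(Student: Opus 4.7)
The plan is to exhibit $W$ explicitly. Picking any lift $u \in K^{\ell}$ of an element $\pi(u) \in Z$, I would define $W \coloneqq \pi(u) + \pi(H_{J, K})$ and show that this is a layer of $H_{J, K/\mathcal{O}}/\pi(H_{J, K})$ containing $Z$.

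To set up, write $Z = \pi(u) + \pi(H_{J', K})$ for some $J' \subseteq \mathcal{A}$, so that $\pi(u) \in H_{J', K/\mathcal{O}}$; by the hypothesis $Z \subseteq H_{J, K/\mathcal{O}}$ we also have $\pi(u) \in H_{J, K/\mathcal{O}}$, which makes $W$ a well-defined coset in $H_{J, K/\mathcal{O}}/\pi(H_{J, K})$. The inclusion $Z \subseteq W$ is then equivalent to $\pi(H_{J', K}) \subseteq \pi(H_{J, K})$, and this would follow from the stronger statement $H_{J', K} \subseteq H_{J, K}$. Establishing the latter containment is the main step of the proof.

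The crux is a scaling argument that exploits the fact that $H_{J', K}$ is a $K$-vector space while $\mathcal{O}$ is a proper subring of $K$. Fix $w \in H_{J', K}$ and $\alpha \in J$. For every $\lambda \in K$, the vector $\lambda w$ still lies in $H_{J', K}$, so $\pi(u + \lambda w) \in Z \subseteq H_{J, K/\mathcal{O}}$, which unpacks to $(u + \lambda w)\alpha = u\alpha + \lambda (w\alpha) \in \mathcal{O}$. Since $u\alpha \in \mathcal{O}$, we obtain $\lambda(w\alpha) \in \mathcal{O}$ for every $\lambda \in K$. If $w\alpha \neq 0$ then $\{\lambda(w\alpha) : \lambda \in K\} = K$, contradicting $K \not\subseteq \mathcal{O}$; hence $w\alpha = 0$ for every $\alpha \in J$, i.e., $w \in H_{J, K}$.

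I do not foresee any real obstacle beyond handling the degenerate case where $\mathcal{O}$ is a field (in which case $K/\mathcal{O} = 0$ and the statement is vacuous). The entire argument hinges on the scaling trick above, which converts the integrality condition $w\alpha \in \mathcal{O}$ into the genuine vanishing $w\alpha = 0$.
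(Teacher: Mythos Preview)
The paper does not actually prove this proposition; it is quoted verbatim from \cite{kuroda2024characteristic-ecaa} as background in Section~\ref{sec: preliminaries}. So there is no ``paper's own proof'' to compare against.

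That said, your argument is correct and self-contained. The only substantive step is the scaling trick: once you know $\pi(u+\lambda w)\in H_{J,K/\mathcal{O}}$ for every $\lambda\in K$, the condition $\lambda(w\alpha)\in\mathcal{O}$ for all $\lambda$ forces $w\alpha=0$ because $\mathcal{O}\subsetneq K$. This cleanly upgrades the coset inclusion $\pi(H_{J',K})\subseteq\pi(H_{J,K})$ to the vector-space inclusion $H_{J',K}\subseteq H_{J,K}$, and the rest is bookkeeping. Your remark about the degenerate case $\mathcal{O}=K$ is accurate: in the paper's setting $\mathcal{O}$ is residually finite with nonzero proper ideals, so $K\neq\mathcal{O}$ and the contradiction goes through; alternatively, when $K/\mathcal{O}=0$ every layer is the singleton $\{0\}$ and the statement is trivial.
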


\begin{proposition}[{\cite[Lemma 4.3 and Lemma 4.6]{kuroda2024characteristic-ecaa}}]\label{KT Lem 4.3}
Given a layer $Z \in L$, there exists a unique intersection $H_{Z} \in L(\mathcal{A}(K))$ such that $Z = \pi(u) + \pi(H_{Z})$ for any $\pi(u) \in Z$. 
In particular $H_{Z} = H_{J_{Z},K}$, where $J_{Z} \coloneqq \Set{\alpha \in \mathcal{A} | Z \subseteq H_{\alpha, K/\mathcal{O}}}$. 
\end{proposition}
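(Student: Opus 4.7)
The plan is to prove the ``in particular'' statement $H_{Z} = H_{J_{Z},K}$ first; existence and uniqueness of $H_{Z}$ then drop out. By the definition of a layer, we may write $Z = \pi(u_{0}) + \pi(H_{J,K})$ for some $J \subseteq \mathcal{A}$ and some $u_{0} \in K^{\ell}$ with $\pi(u_{0}) \in H_{J,K/\mathcal{O}}$. The core task is to show $H_{J,K} = H_{J_{Z},K}$, after which $H_{Z} := H_{J_{Z},K}$ will be the only reasonable candidate.

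One inclusion is cheap. Every point of $Z$ has the form $\pi(u_{0}+v)$ with $v \in H_{J,K}$, and for each $\alpha \in J$ the computation $(u_{0}+v)\alpha = u_{0}\alpha + 0 \in \mathcal{O}$ shows $Z \subseteq H_{J,K/\mathcal{O}}$, i.e.\ $J \subseteq J_{Z}$, and hence $H_{J_{Z},K} \subseteq H_{J,K}$. For the reverse inclusion, fix $\alpha \in J_{Z}$. Since $Z \subseteq H_{\alpha,K/\mathcal{O}}$, \Cref{KT Prop 4.2} applied to the singleton $\{\alpha\}$ produces a layer $W_{\alpha} \in H_{\alpha,K/\mathcal{O}}/\pi(H_{\alpha,K})$ with $Z \subseteq W_{\alpha}$, and since $\pi(u_{0}) \in W_{\alpha}$ we may represent $W_{\alpha} = \pi(u_{0}) + \pi(H_{\alpha,K})$.

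The key step is then the following scaling argument. Given $v \in H_{J,K}$ and any $\lambda \in K^{\times}$, the $K$-linearity of $H_{J,K}$ yields $\lambda v \in H_{J,K}$, so $\pi(u_{0}) + \pi(\lambda v) \in Z \subseteq W_{\alpha}$, and subtracting $\pi(u_{0})$ gives $\pi(\lambda v) \in \pi(H_{\alpha,K})$. Lifting this, write $\lambda v = h + \omega$ with $h \in H_{\alpha,K}$ and $\omega \in \mathcal{O}^{\ell}$; pairing with $\alpha$ produces
\begin{align*}
\lambda (v\alpha) \;=\; h\alpha + \omega\alpha \;=\; \omega\alpha \;\in\; \mathcal{O}.
\end{align*}
As $\lambda$ ranges over $K^{\times}$ this says $K\cdot(v\alpha) \subseteq \mathcal{O}$. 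Since any nonzero $c \in K$ satisfies $Kc = K \not\subseteq \mathcal{O}$, we conclude $v\alpha = 0$. Doing this for every $\alpha \in J_{Z}$ places $v$ in $H_{J_{Z},K}$, giving $H_{J,K} \subseteq H_{J_{Z},K}$.

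Setting $H_{Z} := H_{J_{Z},K}$, we have $Z = \pi(u_{0}) + \pi(H_{Z})$; for any other $\pi(u) \in Z$, the difference $\pi(u)-\pi(u_{0})$ lies in $\pi(H_{Z})$, so $Z = \pi(u) + \pi(H_{Z})$. For uniqueness, if some $H_{Z}' = H_{J',K} \in L(\mathcal{A}(K))$ also represents $Z$ as $\pi(u_{0}) + \pi(H_{J',K})$, then applying the above argument with $J$ replaced by $J'$ forces $H_{J',K} = H_{J_{Z},K} = H_{Z}$. The main obstacle is the scaling step: the containment $\pi(\lambda v) \in \pi(H_{\alpha,K})$ only yields the congruence $\lambda(v\alpha) \in \mathcal{O}$, so one must exploit the freedom to vary $\lambda$, together with the fact that $\mathcal{O}$ is a proper subring of $K$, in order to upgrade that congruence to the equality $v\alpha = 0$.
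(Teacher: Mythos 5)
Your argument is sound and self-contained. Since the paper itself cites this proposition from \cite{kuroda2024characteristic-ecaa} without reproducing a proof, there is no in-paper argument to compare against, and your scaling argument correctly supplies the needed reasoning. The core step --- deducing from $\lambda(v\alpha) \in \mathcal{O}$ for all $\lambda \in K^{\times}$ that $v\alpha = 0$, because no nonzero element of $K$ can have $K\cdot(v\alpha) \subseteq \mathcal{O}$ --- is exactly the right way to extract an honest equality from a family of congruences. The use of \Cref{KT Prop 4.2} to obtain the enveloping layer $W_{\alpha}$ and the passage to $\pi(\lambda v) \in \pi(H_{\alpha,K})$ are both correct.

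Two small points of care. First, the entire scaling step rests on $\mathcal{O} \subsetneq K$; you flag this explicitly, which is appropriate, and it is harmless here since a residually finite Dedekind domain that is a field would make $K/\mathcal{O}=0$ and the whole statement degenerate. Second, in the uniqueness paragraph the phrase ``applying the above argument with $J$ replaced by $J'$'' is a bit loose: your first inclusion $J \subseteq J_{Z}$ needed $\pi(u_{0}) \in H_{J,K/\mathcal{O}}$, which is guaranteed when $Z$ is presented as an element of $H_{J,K/\mathcal{O}}/\pi(H_{J,K})$, but is not a priori known for an arbitrary $H' = H_{J',K}$ with $Z = \pi(u_{0}) + \pi(H')$. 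However, your scaling trick actually yields both inclusions without needing $J' \subseteq J_{Z}$: you already get $H_{J',K} \subseteq H_{J_{Z},K}$ exactly as written, and for the reverse, given $v \in H_{J_{Z},K}$ and $\alpha' \in J'$, the equality $\pi(H_{J_{Z},K}) = \pi(H')$ forces $\pi(\lambda v) \in \pi(H') \subseteq \pi(H_{\alpha',K})$ for all $\lambda$, and the same scaling gives $v\alpha'=0$. With that replacement, uniqueness is fully justified and the proof is complete.
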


The torsion subposets of the poset of layers are periodic with respect to the LCM-period (\zcref{KT Cor4.1}) and the $\mathcal{O}$-torsion subposet is isomorphic to the intersection lattice of the corresponding hyperplane arrangement (\zcref{KT Cor4.2}). 

\begin{proposition}[{\cite[Corollary 4.1]{kuroda2024characteristic-ecaa}}]\label{KT Cor4.1}
For any $\mathfrak{a} \in I(\mathcal{O})$, $L[\mathfrak{a} + \rho_{\mathcal{A}}] = L[\mathfrak{a}]$. 
\end{proposition}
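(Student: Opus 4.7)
The plan is to prove both inclusions separately. The inclusion $L[\mathfrak{a}+\rho_{\mathcal{A}}]\subseteq L[\mathfrak{a}]$ is immediate from the set inclusion $\mathfrak{a}\subseteq\mathfrak{a}+\rho_{\mathcal{A}}$: any layer annihilated by every element of $\mathfrak{a}+\rho_{\mathcal{A}}$ is a fortiori annihilated by every element of $\mathfrak{a}$.

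For the reverse inclusion, I would reduce everything to the single claim $L=L[\rho_{\mathcal{A}}]$, i.e., every layer in $L$ is already $\rho_{\mathcal{A}}$-torsion. Granting this, suppose $Z\in L[\mathfrak{a}]$ and take an arbitrary element $a+r\in\mathfrak{a}+\rho_{\mathcal{A}}$ with $a\in\mathfrak{a}$ and $r\in\rho_{\mathcal{A}}$; then $(a+r)Z=aZ+rZ=0$, so $Z\in L[\mathfrak{a}+\rho_{\mathcal{A}}]$.

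To prove the reduction, I would fix a nonempty $J\subseteq\mathcal{A}$ and apply the snake lemma to the commutative diagram with exact rows
\begin{align*}
& 0 \to \mathcal{O}^{\ell} \to K^{\ell} \to (K/\mathcal{O})^{\ell} \to 0, \\
& 0 \to \mathcal{O}^{J} \to K^{J} \to (K/\mathcal{O})^{J} \to 0,
\end{align*}
whose vertical maps are all given by $G_{J}$. The kernel portion of the resulting long exact sequence contains
\begin{align*}
H_{J,K}\to H_{J,K/\mathcal{O}}\xrightarrow{\delta}\operatorname{Coker} G_{J},
\end{align*}
so $H_{J,K/\mathcal{O}}/\pi(H_{J,K})$ embeds into $\operatorname{Coker} G_{J}$ via $\delta$. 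Since the composition with the natural map $\operatorname{Coker} G_{J}\to\operatorname{Coker}(G_{J}\colon K^{\ell}\to K^{J})$ vanishes and the latter cokernel is a quotient of $K$-vector spaces and hence torsion-free, the image of $\delta$ lies in $\operatorname{tors}(\operatorname{Coker} G_{J})$. By the definition of $\rho_{\mathcal{A}}$ as the lcm (equivalently, the intersection) of the annihilators $\operatorname{Ann}(\operatorname{tors}(\operatorname{Coker} G_{J}))$, we have $\rho_{\mathcal{A}}\subseteq\operatorname{Ann}(\operatorname{tors}(\operatorname{Coker} G_{J}))$, so $\rho_{\mathcal{A}}$ annihilates every coset in $H_{J,K/\mathcal{O}}/\pi(H_{J,K})$. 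Since every layer belongs to such a quotient (taking $J=J_{Z}$ as in \Cref{KT Lem 4.3}), the claim $L=L[\rho_{\mathcal{A}}]$ follows.

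The main obstacle I anticipate is the clean snake-lemma identification pinning down that the layer group embeds into the torsion part of $\operatorname{Coker} G_{J}$; once this is in hand, the ideal-theoretic reduction to the annihilator definition of the LCM-period is bookkeeping. The upgrade from set equality to poset equality is automatic because both subposets inherit their order from $L$.
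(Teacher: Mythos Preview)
Your strategy is sound: the inclusion $L[\mathfrak{a}+\rho_{\mathcal{A}}]\subseteq L[\mathfrak{a}]$ is immediate, and the reverse inclusion follows once you know $L=L[\rho_{\mathcal{A}}]$ (which the paper records separately as \Cref{KT Cor4.2}). The paper does not prove this proposition itself; it is quoted from the cited reference, so there is no in-paper argument to compare against. Your snake-lemma approach to $L=L[\rho_{\mathcal{A}}]$ is the natural one and matches the spirit of how such facts are established.

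There is one step that needs tightening. You argue that because the composition of $\delta$ with the natural map $\operatorname{Coker} G_J \to \operatorname{Coker}(G_J\colon K^\ell\to K^J)$ vanishes and the target is torsion-free, the image of $\delta$ lies in $\operatorname{tors}(\operatorname{Coker} G_J)$. As written this is a non sequitur: a map can land in the kernel of a map to a torsion-free module without its image being torsion (consider the zero map $\mathbb{Z}\to\mathbb{Z}$). What actually makes it work here is that the map $\operatorname{Coker} G_J \to \operatorname{Coker}(G_J|_K)$ is the localization $M\to K\otimes_{\mathcal{O}}M$, whose kernel over a domain is exactly $\operatorname{tors}(M)$; you should say this. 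Alternatively, and more simply, you can bypass the issue: $K/\mathcal{O}$ is a torsion $\mathcal{O}$-module, so $H_{J,K/\mathcal{O}}\subseteq (K/\mathcal{O})^\ell$ and its quotient $H_{J,K/\mathcal{O}}/\pi(H_{J,K})$ are torsion, hence their image under any $\mathcal{O}$-homomorphism is torsion. Either fix closes the gap and your argument goes through.
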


\begin{proposition}[{\cite[Corollary 4.2]{kuroda2024characteristic-ecaa}}]\label{KT Cor4.2}
$L[\mathcal{O}] \simeq L(\mathcal{A}(K))$ and $L[\rho_{\mathcal{A}}] = L$. 
\end{proposition}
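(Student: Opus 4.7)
The proposition splits into two independent statements about torsion subposets of $L$, and I would prove them separately using the characterization of the $\mathfrak{a}$-torsion condition recalled in the text: $Z = \pi(u) + \pi(H_{J,K})$ is $\mathfrak{a}$-torsion iff $au \in H_{J,K} + \mathcal{O}^{\ell}$ for every $a \in \mathfrak{a}$.

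For $L[\mathcal{O}] \simeq L(\mathcal{A}(K))$, the first step is to observe that the $\mathcal{O}$-torsion layers are exactly the ``zero cosets'' $\pi(H_{J,K})$: applying the torsion criterion with $a = 1$ forces $\pi(u) \in \pi(H_{J,K})$, and conversely such cosets are tautologically $\mathcal{O}$-torsion. Combined with \Cref{KT Lem 4.3}, this identifies $L[\mathcal{O}]$ set-theoretically with $L(\mathcal{A}(K))$ via $Z \mapsto H_{Z}$, with inverse $H \mapsto \pi(H)$. To promote this bijection to a poset isomorphism I would use the description $J_{Z} = \Set{\alpha \in \mathcal{A} | Z \subseteq H_{\alpha, K/\mathcal{O}}}$ from \Cref{KT Lem 4.3}: an inclusion $Z_{2} \subseteq Z_{1}$ gives $J_{Z_{1}} \subseteq J_{Z_{2}}$, hence $H_{Z_{2}} \subseteq H_{Z_{1}}$, and the reverse implication is automatic because $\pi$ is additive. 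Both posets are ordered by reverse inclusion, so this is an order-isomorphism.

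For $L[\rho_{\mathcal{A}}] = L$, the plan is to fix an arbitrary layer $Z = \pi(u) + \pi(H_{J,K})$ and an arbitrary $a \in \rho_{\mathcal{A}}$, and verify directly that $au \in H_{J,K} + \mathcal{O}^{\ell}$. The crucial structural observation is that $G_{J}(u) = (u\alpha)_{\alpha \in J}$ lies in $\mathcal{O}^{J}$ (because $Z \subseteq H_{J, K/\mathcal{O}}$), and its class $[G_{J}(u)] \in \operatorname{Coker} G_{J}$ is torsion: choosing any nonzero $b \in \mathcal{O}$ that clears the denominators of $u$ gives $bu \in \mathcal{O}^{\ell}$ and hence $b\cdot [G_{J}(u)] = 0$. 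By the very definition of $\rho_{\mathcal{A}}$ as the lcm of the annihilators $\operatorname{Ann}(\operatorname{tors}(\operatorname{Coker} G_{J}))$, the element $a$ kills $[G_{J}(u)]$, so $aG_{J}(u) = G_{J}(v)$ for some $v \in \mathcal{O}^{\ell}$; extending $G_{J}$ to $K^{\ell}$ we conclude $au - v \in H_{J,K}$, which is exactly what is required.

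The main delicate point will not be a computation but a bookkeeping subtlety in the first part: one must confirm that $Z \mapsto H_{Z}$ is order-reflecting even though $\pi \colon K^{\ell} \to \left(K/\mathcal{O}\right)^{\ell}$ is far from injective. I would sidestep this by routing the argument through the combinatorial invariant $J_{Z}$ rather than directly comparing $\pi$-images, which eliminates any issue caused by non-injectivity of $\pi$.
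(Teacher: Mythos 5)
Your argument is correct, but note that the paper does not prove this proposition at all: it is imported verbatim as Corollary 4.2 of \cite{kuroda2024characteristic-ecaa}, so what you have written is a self-contained derivation rather than a variant of an in-paper proof. Your route uses only material actually displayed in this article: the torsion criterion ($Z=\pi(u)+\pi(H_{J,K})$ is $\mathfrak{a}$-torsion iff $\pi(au)\in\pi(H_{J,K})$ for all $a\in\mathfrak{a}$), the uniqueness statement and the description of $J_{Z}$ in \Cref{KT Lem 4.3}, and the explicit formula for $\rho_{\mathcal{A}}$ in \Cref{charateristic quasi-polynomial}(1). Both halves check out: for $L[\mathcal{O}]\simeq L(\mathcal{A}(K))$, taking $a=1$ does identify the $\mathcal{O}$-torsion layers with the zero cosets $\pi(H_{J,K})$, and routing the order-reflection through $J_{Z}$ (inclusion of layers gives $J_{Z_{1}}\subseteq J_{Z_{2}}$, hence $H_{Z_{2}}\subseteq H_{Z_{1}}$, while monotonicity of $\pi$ on subsets gives the converse) is a clean way to dodge the non-injectivity of $\pi$; for $L[\rho_{\mathcal{A}}]=L$, the observation that $[G_{J}(u)]$ is a torsion element of $\operatorname{Coker}G_{J}$ (clear denominators) and is therefore killed by every $a\in\rho_{\mathcal{A}}$ is exactly the right mechanism, and it closely parallels the computation the paper itself performs in the proof of \Cref{torsion lemma}, so it is very much in the spirit of the authors' toolkit. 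Two small points you should make explicit: since lcm of ideals is intersection, $\rho_{\mathcal{A}}\subseteq\operatorname{Ann}(\operatorname{tors}(\operatorname{Coker}G_{J}))$ for every nonempty $J$, which is what licenses ``$a$ kills $[G_{J}(u)]$''; and the layer with $J=\varnothing$ (the whole space $\left(K/\mathcal{O}\right)^{\ell}$, which is excluded from the lcm defining $\rho_{\mathcal{A}}$) is the zero coset of a trivial quotient and hence torsion for trivial reasons, so it does not need the cokernel argument.
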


\subsection{Six lemmas}

In this subsection, we prove six lemmas which will be used in the proof of \zcref{main theorem}.
Recall that a layer $Z \in L$ is an $\mathfrak{a}$-torsion element if $Z$ is an $\mathfrak{a}$-torsion element in the module $H_{J,K/\mathcal{O}}/\pi(H_{J,K})$ when $Z \in H_{J,K/\mathcal{O}}/\pi(H_{J,K})$. 
Note that being an $\mathfrak{a}$-torsion element is independent of the choice of $J$. 

The first lemma that we prove in this section is known to hold when $\mathcal{O}=\mathbb{Z}$ (see \cite[Definition 4.4(1) and (4.3) in the proof of Lemma 4.5]{tran2019combinatorics-joctsa}).
Moreover, its validity for general residually finite Dedekind domains was raised as a problem in \cite[Remark 4.2]{kuroda2024characteristic-ecaa}. 

\begin{lemma}\label{torsion lemma}
Let $Z \in L$ and $\mathfrak{a} \in I(\mathcal{O})$. 
Then the following are equivalent.  
\begin{enumerate}[(1)]
\item\label{torsion lemma 1} $Z$ is an $\mathfrak{a}$-torsion element. 
\item\label{torsion lemma 2} $Z$ has an $\mathfrak{a}$-torsion element in $\left(K/\mathcal{O}\right)^{\ell}$. 
\end{enumerate}
\end{lemma}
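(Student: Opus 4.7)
The plan is to handle the two implications separately; $(2)\Rightarrow(1)$ is a brief verification, and $(1)\Rightarrow(2)$ is the substantive content.

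For $(2)\Rightarrow(1)$, suppose $\pi(v) \in Z$ is $\mathfrak{a}$-torsion in $(K/\mathcal{O})^{\ell}$. By \Cref{KT Lem 4.3} we may write $Z = \pi(v) + \pi(H_{Z})$; then for any $a \in \mathfrak{a}$ one has $a\pi(v) = 0$ and $aH_{Z} \subseteq H_{Z}$ (because $H_{Z}$ is a $K$-subspace), which together give $aZ \subseteq \pi(H_{Z})$, so $Z$ is an $\mathfrak{a}$-torsion element.

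For $(1)\Rightarrow(2)$, the key observation is that $N \coloneqq \pi(H_{J,K})$ is a divisible $\mathcal{O}$-module: it is a quotient of the $K$-vector space $H_{J,K}$, which is divisible, and quotients of divisible modules are divisible. Since $\mathcal{O}$ is a Dedekind domain, divisible $\mathcal{O}$-modules are injective; consequently $\operatorname{Ext}^{1}_{\mathcal{O}}(\mathcal{O}/\mathfrak{a}, N) = 0$. Applying $\Hom_{\mathcal{O}}(\mathcal{O}/\mathfrak{a}, -)$ to the short exact sequence
\[
0 \to N \to (K/\mathcal{O})^{\ell} \to (K/\mathcal{O})^{\ell}/N \to 0
\]
produces a surjection $(K/\mathcal{O})^{\ell}[\mathfrak{a}] \twoheadrightarrow \bigl((K/\mathcal{O})^{\ell}/N\bigr)[\mathfrak{a}]$. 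Given an $\mathfrak{a}$-torsion layer $Z$, view its class in the larger quotient $(K/\mathcal{O})^{\ell}/N$ via the natural injection and lift it to an $\mathfrak{a}$-torsion element $\pi(v) \in (K/\mathcal{O})^{\ell}$. By construction $\pi(v)$ lies in the coset $Z$; moreover, for any representative $\pi(u) \in Z$ we have $\pi(u) \in H_{J,K/\mathcal{O}}$ and $N \subseteq H_{J,K/\mathcal{O}}$, so $\pi(v) \in H_{J,K/\mathcal{O}}$ automatically.

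The main obstacle is the injectivity of $N$, which is exactly where the Dedekind hypothesis enters essentially. Equivalently, the argument can be phrased in an elementary manner avoiding $\operatorname{Ext}$: condition (1) yields a well-defined $\mathcal{O}$-linear map $\mathfrak{a} \to N$, $a \mapsto \pi(au)$; Baer's criterion (applied to the injective module $N$, using that ideals in a Dedekind domain are $2$-generated) provides an extension to $\mathcal{O} \to N$, whose value at $1 \in \mathcal{O}$ is $\pi(h)$ for some $h \in H_{J,K}$, and $v \coloneqq u - h$ then satisfies $\mathfrak{a}v \subseteq \mathcal{O}^{\ell}$, so $\pi(v)$ is the desired $\mathfrak{a}$-torsion element of $Z$.
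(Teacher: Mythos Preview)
Your proof is correct, but it proceeds by a genuinely different route than the paper's. For $(1)\Rightarrow(2)$ the paper works concretely with the matrix $G_{J}$: given $\pi(u)\in Z$ it sets $v\coloneqq uG_{J}\in\mathcal{O}^{k}$, shows that $\mathfrak{a}v\subseteq\Image G_{J}$, and then uses the fractional-ideal computation $v\in\mathcal{O}v=\mathfrak{a}^{-1}\mathfrak{a}v\subseteq\mathfrak{a}^{-1}\Image G_{J}$ to write $v=zG_{J}$ with $z\in(\mathfrak{a}^{-1})^{\ell}$; then $\pi(z)\in Z$ is the desired $\mathfrak{a}$-torsion element. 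Your argument instead isolates the structural reason the lifting succeeds: $\pi(H_{J,K})$ is divisible, hence injective over a Dedekind domain, so $\operatorname{Ext}^{1}_{\mathcal{O}}(\mathcal{O}/\mathfrak{a},\pi(H_{J,K}))=0$ and $\mathfrak{a}$-torsion lifts along the quotient map. The paper's version is entirely elementary and explicitly exhibits the element in $(\mathfrak{a}^{-1})^{\ell}$; your version is shorter and more conceptual, and makes transparent that the Dedekind hypothesis enters precisely through ``divisible $\Rightarrow$ injective''. One small wording point: in your alternative phrasing, Baer's criterion is the tool that \emph{establishes} injectivity from divisibility (via the $2$-generation of ideals), after which the extension $\mathfrak{a}\to N$ to $\mathcal{O}\to N$ is just the definition of injectivity; your sentence reads as if Baer is being invoked for the extension step itself.
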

\begin{proof}
Suppose that $Z$ belongs to the module $H_{J,K/\mathcal{O}}/\pi(H_{J,K})$, where $J = \{\alpha_{j_{1}}, \dots, \alpha_{j_{k}}\} \subseteq \mathcal{A}$. 
Define the submatrix $G_{J}$ of $G$ by $G_{J} \coloneqq (\alpha_{j_{1}} \ \dots \alpha_{j_{k}} )$. 
We identify the matrix $G_{J}$ with the map from $\mathcal{O}^{\ell}$ to $\mathcal{O}^{k}$ defined by $u \mapsto uG_{J}$. 

First, we show that (\ref{torsion lemma 1}) implies (\ref{torsion lemma 2}). 
Let $\pi(u) \in H_{J,K/\mathcal{O}}$ be a representative of $Z$. 
Put $v \coloneqq uG_{J} \in \mathcal{O}^{k}$. 
We show that $\mathfrak{a}v \subseteq \Image G_{J}$.  
Take an element $a \in \mathfrak{a}$. 
Since $Z$ is an $\mathfrak{a}$-torsion element, $a\pi(u) \in \pi(H_{J,K})$. 
Therefore there exists $w \in H_{J,K}$ such that $au-w \in \mathcal{O}^{\ell}$. 
Then $(au-w)G_{J} = auG_{J} - wG_{J} = av - 0 = av$. 
Hence $av \in \Image G_{J}$. 
Therefore $\mathfrak{a}v \subseteq \Image G_{J}$. 

Since $v \in \mathcal{O}v = \mathfrak{a}^{-1}\mathfrak{a}v \subseteq \mathfrak{a}^{-1} \Image G_{J}$, $v$ can be represented as 
\begin{align*}
v = \sum_{i=1}^{m}b_{i} \cdot w_{i}G_{J} = \left(\sum_{i=1}^{m}b_{i}w_{i}\right)G_{J}, 
\end{align*}
where $b_{i} \in \mathfrak{a}^{-1}$ and $w_{i} \in \mathcal{O}^{\ell}$ for each $i$. 
Put $z \coloneqq \sum_{i=1}^{m}b_{i}w_{i} \in K^{\ell}$. 
Then $zG_{J} = v = uG_{J}$. 
Therefore $z-u \in H_{J,K}$ and $\pi(z)-\pi(u) \in \pi(H_{J,K})$. 
Hence $\pi(z) \in \pi(u) + \pi(H_{J,K}) = Z$. 
Moreover, $az = \sum_{i=1}^{m}ab_{i}w_{i} \in \mathcal{O}^{\ell}$ for any $a \in \mathfrak{a}$. 
Therefore $Z$ has an $\mathfrak{a}$-torsion element $\pi(z)$. 

Next, assume (\ref{torsion lemma 2}) holds. 
Suppose that $\pi(u) \in Z$ is an $\mathfrak{a}$-torsion element. 
Then for any $a \in \mathfrak{a}$, $a \cdot Z = \pi(au) + \pi(H_{J,K}) = \pi(H_{J,K})$. 
Thus $Z$ is an $\mathfrak{a}$-torsion element in the module $H_{J,K/\mathcal{O}}/\pi(H_{J,K})$. 
\end{proof}

Our next goal is to show that the cardinality of the $\mathfrak{a}$-torsion part of a layer $Z$ is equal to the absolute norm $N(\mathfrak{a})$ whenever $Z[\mathfrak{a}] \neq \varnothing$ (see \zcref{cardinality of Z[a]}).  
To this end, we first prove the statement in the case where $\mathcal{O}$ is a principal ideal domain (see \zcref{pid case}).

\begin{lemma}\label{pid case}
Suppose that $\mathcal{O}$ is a principal ideal domain. 
Let $U$ be a subspace of $K^{\ell}$. 
Then 
$\# \pi(U)[a] = N(\langle a \rangle)^{\dim_{K}U}$ for any $a \in \mathcal{O}\setminus\{0\}$. 
\end{lemma}
\begin{proof}
Let $r \coloneqq \dim_{K}U$. 
Since $U$ is the kernel of the canonical projection $K^{\ell} \to K^{\ell}/U$, there exists a matrix $A \in \Mat_{\ell \times (\ell-r)}(\mathcal{O})$ with $\rank A = \ell-r$ such that
\begin{align*}
U = \Set{u \in K^{\ell} | uA = 0}. 
\end{align*}
Let $P \in \GL_{\ell}(\mathcal{O})$ and $Q \in \GL_{\ell-r}(\mathcal{O})$ be matrices such that $PAQ$ is the Smith normal form of $A$. 
Define 
\begin{align*}
U^{\prime} \coloneqq \Set{u \in K^{\ell} | uPAQ = 0} = \Set{u \in K^{\ell} | u_{1} = \dots = u_{\ell-r} = 0}. 
\end{align*}
The spaces $U$ and $U^{\prime}$ are isomorphic as $\mathcal{O}$-modules by the map $u \to uP^{-1}$, which induces an isomorphism $\pi(U) \simeq \pi(U^{\prime})$ as $\mathcal{O}$-modules. 
Therefore 
\begin{align*}
\#\pi(U)[a] = \#\pi(U^{\prime})[a] = \# \left(a^{-1}\mathcal{O}/\mathcal{O}\right)^{r} = \#\left(\mathcal{O}/\langle a \rangle\right)^{r} = N(\langle a \rangle)^{\dim_{K}U}. 
\end{align*}
\end{proof}

\begin{lemma}\label{cardinality of Z[a]}
Let $\mathfrak{a} \in I(\mathcal{O})$ and $Z \in L$. 
Suppose that $Z[\mathfrak{a}] \neq \varnothing$.
Then $\# Z[\mathfrak{a}] = N(\mathfrak{a})^{\dim Z}$. 
\end{lemma}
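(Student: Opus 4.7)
The plan is to reduce the claim to a module-theoretic statement about the lattice $M := H_{Z} \cap \mathcal{O}^{\ell}$, and then finish by exploiting that $M$ is projective over the Dedekind domain $\mathcal{O}$.

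First I would reduce to counting the $\mathfrak{a}$-torsion of $\pi(H_{Z})$. By \Cref{KT Lem 4.3}, $Z = \pi(u) + \pi(H_{Z})$ for any representative $\pi(u) \in Z$. Choose $\pi(u_{0}) \in Z[\mathfrak{a}]$, which exists by hypothesis. Every element of $Z$ has the form $\pi(u_{0}) + \pi(v)$ with $v \in H_{Z}$, and it is $\mathfrak{a}$-torsion if and only if $\pi(v)$ is (since $\pi(u_{0})$ is already $\mathfrak{a}$-torsion). Translation thus gives a bijection $Z[\mathfrak{a}] \to \pi(H_{Z})[\mathfrak{a}]$, so it suffices to prove $\# \pi(H_{Z})[\mathfrak{a}] = N(\mathfrak{a})^{\dim Z}$.

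Next I would identify $\pi(H_{Z})$ with $H_{Z}/M$ for $M := H_{Z} \cap \mathcal{O}^{\ell}$. A coset $v + M$ is $\mathfrak{a}$-torsion iff $\mathfrak{a} v \subseteq M$, i.e.\ $v \in \mathfrak{a}^{-1}M$ (computed inside $H_{Z} = K \cdot M$), so $\pi(H_{Z})[\mathfrak{a}] \cong \mathfrak{a}^{-1}M/M$. Clearing denominators shows $K \cdot M = H_{Z}$, so $M$ is a finitely generated torsion-free $\mathcal{O}$-module of rank $d := \dim_{K} H_{Z} = \dim Z$; since $\mathcal{O}$ is Dedekind, $M$ is projective.

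Finally I would compute $\# \mathfrak{a}^{-1}M/M$ via projectivity. Tensoring the short exact sequence $0 \to \mathcal{O} \to \mathfrak{a}^{-1} \to \mathfrak{a}^{-1}/\mathcal{O} \to 0$ with the flat module $M$ gives $\mathfrak{a}^{-1}M/M \cong (\mathfrak{a}^{-1}/\mathcal{O}) \otimes_{\mathcal{O}} M$. Tensoring $0 \to \mathfrak{a} \to \mathcal{O} \to \mathcal{O}/\mathfrak{a} \to 0$ with the invertible ideal $\mathfrak{a}^{-1}$ (using $\mathfrak{a} \otimes \mathfrak{a}^{-1} = \mathcal{O}$) identifies $\mathfrak{a}^{-1}/\mathcal{O} \cong \mathfrak{a}^{-1} \otimes_{\mathcal{O}} (\mathcal{O}/\mathfrak{a})$, which is a rank-one projective module over the semilocal Artinian ring $\mathcal{O}/\mathfrak{a}$, hence free of rank one. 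Therefore $\mathfrak{a}^{-1}M/M$ is a rank-$d$ projective $\mathcal{O}/\mathfrak{a}$-module, and since $\mathcal{O}/\mathfrak{a} \cong \prod_{i} \mathcal{O}/\mathfrak{p}_{i}^{e_{i}}$ is a finite product of local Artinian rings, it is free of rank $d$. Thus $\# \mathfrak{a}^{-1}M/M = (\#\mathcal{O}/\mathfrak{a})^{d} = N(\mathfrak{a})^{d}$.

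The main obstacle is the transition from the PID case in \Cref{pid case} (which relies on the Smith normal form) to general Dedekind domains, where $M$ need not be free. Either one invokes the structure theorem $M \cong \mathcal{O}^{d-1} \oplus \mathfrak{b}$ and computes $\mathfrak{a}^{-1}M/M \cong (\mathfrak{a}^{-1}/\mathcal{O})^{d-1} \oplus (\mathfrak{a}^{-1}\mathfrak{b}/\mathfrak{b})$ summand by summand, or one argues via local-freeness as above; an alternative route is to primary-decompose $\mathfrak{a} = \prod \mathfrak{p}_{i}^{e_{i}}$, localize at each $\mathfrak{p}_{i}$ where $\mathcal{O}_{\mathfrak{p}_{i}}$ is a DVR and \Cref{pid case} applies, after checking that $K/\mathcal{O} \to K/\mathcal{O}_{\mathfrak{p}_{i}}$ is an isomorphism on $\mathfrak{p}_{i}$-primary torsion.
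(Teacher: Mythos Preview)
Your argument is correct and takes a genuinely different route from the paper. The paper first establishes the translation step (identical to yours), but then reduces to \Cref{pid case} by semi-localizing at $S=\mathcal{O}\setminus\bigcup_{\mathfrak{p}\mid\mathfrak{a}}\mathfrak{p}$ so that $\mathcal{O}_{S}$ becomes a PID, and shows by hand that the natural map $\pi(H_{J,K})[\mathfrak{a}]\to\pi_{S}(H_{J,K})[\mathfrak{a}\mathcal{O}_{S}]$ is an $\mathcal{O}$-isomorphism; the cardinality then follows from the Smith-normal-form count in \Cref{pid case}. Your approach instead stays over the Dedekind domain and exploits that $M=H_{Z}\cap\mathcal{O}^{\ell}$ is projective of rank $d$, identifying $\pi(H_{Z})[\mathfrak{a}]\cong\mathfrak{a}^{-1}M/M\cong(\mathfrak{a}^{-1}/\mathcal{O})\otimes_{\mathcal{O}}M$ and then observing that this is a rank-$d$ projective, hence free, module over the semilocal ring $\mathcal{O}/\mathfrak{a}$. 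What this buys: your argument is more conceptual and bypasses \Cref{pid case} entirely, at the cost of invoking flatness/projectivity and the triviality of the Picard group of $\mathcal{O}/\mathfrak{a}$; the paper's argument is more elementary and self-contained but needs the four-part preparatory lemma. Amusingly, the ``alternative route'' you sketch in your last paragraph (localize so that \Cref{pid case} applies) is essentially the paper's actual proof, only the paper does a single semi-localization at all primes dividing $\mathfrak{a}$ at once rather than one prime at a time.
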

\begin{proof}
Suppose that $Z$ belongs to the module $H_{J,K/\mathcal{O}}/\pi(H_{J,K})$. 
Let $\pi(u) \in Z[\mathfrak{a}]$. 
Then $Z = \pi(u) + \pi(H_{J,K})$ and $u \in (\mathfrak{a}^{-1})^{\ell}$. 
Define the map $\varphi \colon \pi(H_{J,K})[\mathfrak{a}] \to Z[\mathfrak{a}]$ by 
$\varphi(\pi(w)) \coloneqq \pi(u) + \pi(w)$. 
Since $\phi$ is bijection, $\# Z[\mathfrak{a}] = \# \pi(H_{J,K})[\mathfrak{a}]$.

Let $S \coloneqq \mathcal{O}\setminus\bigcup_{\mathfrak{p}} \mathfrak{p}$, where $\mathfrak{p}$ runs over all prime ideals in $\mathcal{O}$ with $\mathfrak{p} \mid \mathfrak{a}$. 
Then the localization $\mathcal{O}_{S}$ of $\mathcal{O}$ with respect to $S$ is a principal ideal domain.
% and a flat $\mathcal{O}$-module (See \textcolor{red}{[cite]}). 
Let $\pi_{S} \colon K^{\ell} \to \left(K/\mathcal{O}_{S}\right)^{\ell}$ be the canonical projection 
and $\eta \colon (K/\mathcal{O})^{\ell} \to (K/\mathcal{O}_{S})^{\ell}$ be the $\mathcal{O}$-homomorphism defined by $\eta(\pi(u)) \coloneqq \pi_{S}(u)$. 
Suppose that $\pi(u) \in \pi(H_{J,K})[\mathfrak{a}]$ with $u \in H_{J,K}$. 
Then, for any $a/s \in \mathfrak{a}\mathcal{O}_{S}$ with $a \in \mathfrak{a}$ and $s \in S$, we obtain that $au \in \mathcal{O}^{\ell}$, which implies that $\frac{1}{s}au \in \mathcal{O}_{S}^{\ell}$. 
Thus 
\begin{align*}
\dfrac{a}{s} \eta(\pi(u)) = \dfrac{a}{s}\pi_{S}(u) = \pi_{S}\left(\dfrac{1}{s}au\right) = \pi_{S}(0). 
\end{align*}
Therefore $\eta(\pi(u)) \in \pi_{S}(H_{J,K})[\mathfrak{a}\mathcal{O}_{S}]$ and $\eta$ induces an $\mathcal{O}$-homomorphism from $\pi(H_{J,K})[\mathfrak{a}]$ to $\pi_{S}(H_{J,K})[\mathfrak{a}\mathcal{O}_{S}]$. 
We will show that this map, also denoted by $\eta$, is an $\mathcal{O}$-isomorphism. 

To show the injectivity, suppose that $\pi(u) \in \pi(H_{J,K})[\mathfrak{a}]$ satisfies $\eta(\pi(u)) = \pi_{S}(0)$. 
Then $u \in \mathcal{O}_{S}^{\ell}$ and there exists $s \in S$ such that $su \in \mathcal{O}^{\ell}$. 
Since $\langle s \rangle + \mathfrak{a} = \mathcal{O}$ and $\mathfrak{a}u \subseteq \mathcal{O}^{\ell}$ 
\begin{align*}
u \in \mathcal{O}u = (\langle s \rangle + \mathfrak{a})u = \langle s \rangle u + \mathfrak{a}u \subseteq \mathcal{O}^{\ell}. 
\end{align*}
Therefore $\pi(u) = \pi(0)$, which implies that $\eta$ is injective. 

To prove that $\eta$ is surjective, suppose that $\pi_{S}(u) \in \pi_{S}(H_{J,K})[\mathfrak{a}\mathcal{O}_{S}]$ with $u \in H_{J,K}$. 
Then $\mathfrak{a}u \subseteq \mathcal{O}_{S}^{\ell}$. 
Therefore $u \in \mathcal{O}u = \mathfrak{a}^{-1}\mathfrak{a}u \subseteq \mathfrak{a}^{-1}\mathcal{O}_{S}^{\ell}$. 
Hence there exists $s \in S$ such that $su \in (\mathfrak{a}^{-1})^{\ell}$. 
Since $\mathcal{O} = \langle s \rangle + \mathfrak{a}$, there exist $b \in \mathcal{O}$ and $a \in \mathfrak{a}$ such that $1 = bs + a$. 
Then $u = bsu + au$. 
Since $bsu \in (\mathfrak{a}^{-1})^{\ell}$, $\pi(bsu) \in \pi(H_{J,K})[\mathfrak{a}]$. 
We can conclude that $\eta$ is surjective since 
\begin{align*}
\eta(\pi(bsu)) = \pi_{S}(bsu) = \pi_{S}(bsu) + \pi_{S}(au) = \pi_{S}(u). 
\end{align*}

Since $\pi(H_{J,K})[\mathfrak{a}] \simeq \pi_{S}(H_{J,K})[\mathfrak{a}\mathcal{O}_{S}]$ as $\mathcal{O}$-modules, \zcref{pid case} implies 
\begin{align*}
\# Z[\mathfrak{a}] 
= \# \pi(H_{J,K})[\mathfrak{a}]
= \# \pi_{S}(H_{J,K})[\mathfrak{a}\mathcal{O}_{S}]
= N_{\mathcal{O}_{S}}(\mathfrak{a}\mathcal{O}_{S})^{\dim_{K} H_{J,K}}
= N(\mathfrak{a})^{\dim_{K} H_{J,K}}
= N(\mathfrak{a})^{\dim Z}, 
\end{align*}
where $N_{\mathcal{O}_{S}}$ denotes the absolute norm on $\mathcal{O}_{S}$. 
\end{proof}

Our next goal is to establish a connection between the number of atoms below a layer $Z$ and the weight of an element not belonging to $Z$ (see \zcref{a(Z)}). 
The following lemma serves as a preparation for it. 

\begin{lemma}\label{sharing layer <=> parallel}
Suppose that $\alpha, \beta \in \mathcal{A}$. 
Then the following conditions are equivalent. 
\begin{enumerate}[(1)]
\item\label{sharing layer <=> parallel 1} 
$H_{\alpha_,K/\mathcal{O}}$ and $H_{\beta,K/\mathcal{O}}$ share a layer. 

\item\label{sharing layer <=> parallel 2} 
$H_{\alpha,K} = H_{\beta,K}$. 

\item\label{sharing layer <=> parallel 3} 
$\alpha$ is parallel to $\beta$. 
\end{enumerate}
\end{lemma}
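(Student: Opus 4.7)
The plan is to proceed via the cycle $(3) \Rightarrow (2) \Rightarrow (1) \Rightarrow (3)$, with $(1) \Rightarrow (2)$ being the only step that uses anything beyond the definitions. The equivalence $(2) \Leftrightarrow (3)$ is routine linear algebra over the field $K$: two nonzero vectors $\alpha, \beta \in \mathcal{O}^{\ell}$ satisfy $H_{\alpha, K} = H_{\beta, K}$ precisely when one is a $K^{\times}$-multiple of the other, since each hyperplane determines its defining linear form up to a nonzero scalar.

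For $(2) \Rightarrow (1)$, if $H \coloneqq H_{\alpha, K} = H_{\beta, K}$, then $\pi(H_{\alpha, K}) = \pi(H_{\beta, K}) = \pi(H)$, and the ``origin'' coset $\pi(H)$ belongs simultaneously to $H_{\alpha, K/\mathcal{O}}/\pi(H_{\alpha, K})$ and $H_{\beta, K/\mathcal{O}}/\pi(H_{\beta, K})$, so it is a common layer.

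The main step is $(1) \Rightarrow (2)$. Suppose $Z$ is a common layer, and fix any representative $\pi(u) \in Z$. Since $Z$ is a coset of $\pi(H_{\alpha, K})$ inside $H_{\alpha, K/\mathcal{O}}$, we have $Z = \pi(u) + \pi(H_{\alpha, K})$ (this holding for every $\pi(u) \in Z$), and likewise $Z = \pi(u) + \pi(H_{\beta, K})$. Both $H_{\alpha, K}$ and $H_{\beta, K}$ are elements of $L(\mathcal{A}(K))$, so the uniqueness clause of \Cref{KT Lem 4.3} -- which guarantees a unique $H_{Z} \in L(\mathcal{A}(K))$ satisfying $Z = \pi(w) + \pi(H_{Z})$ for every $\pi(w) \in Z$ -- forces $H_{\alpha, K} = H_{Z} = H_{\beta, K}$, which is $(2)$.

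The only real content beyond bookkeeping is thus the uniqueness in \Cref{KT Lem 4.3}, which encodes the fact that the subgroup $\pi(H_{J, K}) \subseteq (K/\mathcal{O})^{\ell}$ determines the intersection $H_{J, K} \in L(\mathcal{A}(K))$. Once that proposition is taken as a black box, the present lemma is a direct translation between parallelism in $\mathcal{A}$, equality of hyperplanes in $K^{\ell}$, and the existence of a shared layer in $(K/\mathcal{O})^{\ell}$; I do not anticipate any significant obstacle.
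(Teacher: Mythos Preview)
Your proof is correct and follows essentially the same approach as the paper: the equivalence $(2)\Leftrightarrow(3)$ is immediate, $(2)\Rightarrow(1)$ is witnessed by the origin coset $\pi(H_{\alpha,K})$, and the substantive implication $(1)\Rightarrow(2)$ is obtained by writing the shared layer as a coset of both $\pi(H_{\alpha,K})$ and $\pi(H_{\beta,K})$ and invoking the uniqueness clause of \Cref{KT Lem 4.3}. The only cosmetic discrepancy is that your opening sentence announces the cycle $(3)\Rightarrow(2)\Rightarrow(1)\Rightarrow(3)$ but you then (correctly) prove $(1)\Rightarrow(2)$ instead of $(1)\Rightarrow(3)$; since you have already established $(2)\Leftrightarrow(3)$, this is harmless, but you may wish to align the roadmap with what you actually do.
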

\begin{proof}
It is clear that the statements (\ref{sharing layer <=> parallel 2}) and (\ref{sharing layer <=> parallel 3}) are equivalent by definition. 
Suppose $H_{\alpha,K/\mathcal{O}}$ and $H_{\beta,K/\mathcal{O}}$ share a layer $Y$. 
Then there exist $\pi(u) \in H_{\alpha,K/\mathcal{O}}$ and $\pi(v) \in H_{\beta,K/\mathcal{O}} $ such that 
\begin{align*}
Y = \pi(u) + \pi(H_{\alpha, K}) = \pi(v) + \pi(H_{\beta,K}). 
\end{align*}
By \zcref{KT Lem 4.3}, $H_{\alpha,K} = H_{\beta,K}$. 
Conversely, suppose $H_{\alpha,K} = H_{\beta,K}$. 
Then $H_{\alpha,K/\mathcal{O}}$ and $H_{\beta,K/\mathcal{O}}$ share the layer $\pi(H_{\alpha,K})$. 
\end{proof}

Given a layer $Z \in L$, define the \textbf{complement} $M(Z)$ of $Z$ by 
\begin{align*}
M(Z) \coloneqq Z \setminus \bigcup_{Y > Z}Y. 
\end{align*}
Recall $a(Z)$ denotes the number of atoms below $Z$.

\begin{lemma}\label{a(Z)}
Let $Z \in L$ and $\pi(u) \in Z$. 
Then the following statements hold. 
\begin{enumerate}[(1)]
\item\label{a(Z) 1} $a(Z) \leq p(\mathcal{A})-\wt(\pi(u)G)$. 
\item\label{a(Z) 2} $a(Z) = p(\mathcal{A})-\wt(\pi(u)G)$ if and only if $\pi(u) \in M(Z)$. 
\end{enumerate}
\end{lemma}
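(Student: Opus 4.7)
My plan is to reinterpret both sides of the inequality as cardinalities of sets of parallel classes and then compare the two. From the definition of weight I would first observe the elementary identity
\[
p(\mathcal{A}) - \wt(\pi(u)G) \;=\; \#\{\, C \in \mathcal{P}(\mathcal{A}) : \exists\,\alpha \in C,\ \pi(u) \in H_{\alpha, K/\mathcal{O}} \,\}.
\]
The bulk of the work is to establish the analogous identity
\[
a(Z) \;=\; \#\{\, C \in \mathcal{P}(\mathcal{A}) : \exists\,\alpha \in C,\ Z \subseteq H_{\alpha, K/\mathcal{O}} \,\}
\]
via a bijection between atoms $A \supseteq Z$ and parallel classes on the right. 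Each atom $A \supseteq Z$ is a layer of some $H_{\alpha, K/\mathcal{O}}$ (since $A$ is non-$\hat{0}$, some $\alpha$ has $A \subseteq H_{\alpha, K/\mathcal{O}}$, and then \Cref{KT Prop 4.2} combined with minimality of atoms pins $A$ down as a layer of $H_{\alpha, K/\mathcal{O}}$), and its parallel class is well-defined by \Cref{sharing layer <=> parallel}; conversely, any parallel class $C$ for which some $\alpha \in C$ satisfies $Z \subseteq H_{\alpha, K/\mathcal{O}}$ produces, via \Cref{KT Prop 4.2}, an atom above $Z$ in $H_{\alpha, K/\mathcal{O}}$. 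Injectivity is the step where \Cref{KT Lem 4.3} does the real work: two atoms above $Z$ sitting in the same parallel class are cosets of the same subspace $\pi(H_{\alpha, K})$ and share any point of $Z$, hence must coincide. Once both sides are in this ``class-counting'' form, part (1) is immediate because $\pi(u) \in Z$ turns ``$Z \subseteq H_{\alpha, K/\mathcal{O}}$'' into a stronger condition than ``$\pi(u) \in H_{\alpha, K/\mathcal{O}}$'', making the first set a subset of the second.

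For the $(\Leftarrow)$ direction of (2), assume $\pi(u) \in M(Z)$ and fix any $\alpha \in \mathcal{A}$ with $\pi(u) \in H_{\alpha, K/\mathcal{O}}$. The key construction is the layer
\[
B \;:=\; \pi(u) + \pi(H_{J_Z \cup \{\alpha\}, K}),
\]
which is a legitimate element of $H_{J_Z \cup \{\alpha\}, K/\mathcal{O}} / \pi(H_{J_Z \cup \{\alpha\}, K})$ because $\pi(u)$ annihilates every vector of $J_Z \cup \{\alpha\}$, and which satisfies $B \subseteq Z$ since $H_{J_Z \cup \{\alpha\}, K} \subseteq H_Z$. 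A strict inclusion $B \subsetneq Z$ would place $B$ strictly above $Z$ in the poset while $\pi(u) \in B$, contradicting $\pi(u) \in M(Z)$. Hence $B = Z$ as subsets of $(K/\mathcal{O})^\ell$, so $Z \subseteq H_{\alpha, K/\mathcal{O}}$; in particular $\alpha$ itself witnesses the required condition for the parallel class of $\alpha$.

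For the $(\Rightarrow)$ direction I would argue by contrapositive. If $\pi(u) \in Y$ for some $Y > Z$, then $J_Y \supsetneq J_Z$; the strict inclusion is needed because $J_Y = J_Z$ would give $H_Y = H_Z$ and force $Y = Z$, as they would be two cosets of $\pi(H_Z)$ sharing $\pi(u)$. Pick $\alpha \in J_Y \setminus J_Z$, so $\pi(u) \in Y \subseteq H_{\alpha, K/\mathcal{O}}$. If the parallel class of $\alpha$ contained some $\alpha' \in J_Z$, parallelism would give $H_{\alpha, K} = H_{\alpha', K} \supseteq H_Z$, and combining $u\alpha \in \mathcal{O}$ with $h\alpha = 0$ for every $h \in H_Z$ would force $(u+h)\alpha \in \mathcal{O}$ throughout $Z$, i.e., $Z \subseteq H_{\alpha, K/\mathcal{O}}$ and $\alpha \in J_Z$, contradicting the choice of $\alpha$. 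Hence the equality in (2) fails.

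The main obstacle is the rewriting of $a(Z)$ as a count of parallel classes, and specifically the injectivity of the map from atoms above $Z$ to parallel classes; this is where \Cref{sharing layer <=> parallel} and \Cref{KT Lem 4.3} combine to do the genuine work. Once that identification is in place, part (1) and both halves of (2) reduce to the inclusion of the two sets of parallel classes and the layer-$B$ trick above.
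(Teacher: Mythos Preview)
Your proof is correct and follows essentially the same route as the paper: both arguments identify $p(\mathcal{A})-\wt(\pi(u)G)$ with the set $R$ of parallel classes meeting $\{\alpha:\pi(u)\in H_{\alpha,K/\mathcal{O}}\}$, build the map from atoms above $Z$ to parallel classes via \Cref{sharing layer <=> parallel} and \Cref{KT Lem 4.3}, and for the equality case construct the auxiliary layer inside $H_{J_Z\cup\{\alpha\},K/\mathcal{O}}$. The only organizational difference is that you first upgrade the paper's injection $D\hookrightarrow R$ to a bijection $D\cong R':=\{C:\exists\,\alpha\in C,\ Z\subseteq H_{\alpha,K/\mathcal{O}}\}$, so that both parts become a comparison of $R'$ with $R$; the paper instead handles the forward direction of (2) by showing $a(Z)=a(Y)$ forces $J_Z=J_Y$ and hence $Z=Y$, while you directly exhibit a class in $R\setminus R'$ from any $\alpha\in J_Y\setminus J_Z$.
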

\begin{proof}
(\ref{a(Z) 1}) 
Let $D$ be the set of atoms $A$ of $L$ such that $A \leq Z$ and $R$ the set of parallel classes $C$ of $\mathcal{A}$ such that $\pi(u) \in H_{\alpha,K/\mathcal{O}}$ for some $\alpha \in C$. 
Then $\# D = a(Z)$ and $\#R = p(\mathcal{A}) - \wt(\pi(u)G)$. 

Let $A \in D$ and suppose that $A$ is a layer of both of $H_{\alpha,K/\mathcal{O}}$ and $H_{\beta,K/\mathcal{O}}$.  
Then \zcref{sharing layer <=> parallel} implies that $\alpha$ is parallel to $\beta$. 
Let $C_{\alpha}$ denote the parallel class containing $\alpha$. 
Then $C_{\alpha} \in R$ since $\alpha \in C_{\alpha}$ and $\pi(u) \in Z \subseteq A \subseteq H_{\alpha,K/\mathcal{O}}$. 
Therefore the map $f \colon D \to R$ defined by $f(A) \coloneqq C_{\alpha}$ is well-defined. 

Let us show that $f$ is injective. 
Suppose that $f(A) = f(B)$. 
When $A$ (resp. $B$) is a layer of $H_{\alpha,K/\mathcal{O}}$ (resp. $H_{\beta,K/\mathcal{O}}$), 
\begin{align*}
A = \pi(u) + \pi(H_{\alpha,K}) \qquad (\text{resp. } B = \pi(u) + \pi(H_{\beta,K}))
\end{align*}
and $C_{\alpha} = f(A) = f(B) = C_{\beta}$. 
By \zcref{sharing layer <=> parallel}, $H_{\alpha,K} = H_{\beta,K}$. 
Thus $A = B$. 
Therefore $f$ is injective and $a(Z) \leq p(\mathcal{A})-\wt(\pi(x)G)$. 

(\ref{a(Z) 2}) 
Suppose that the equality $a(Z) = p(\mathcal{A})-\wt(\pi(u)G)$ holds. 
Assume that $\pi(u) \not\in M(Z)$. 
Then there exists a layer $Y \in L(\mathcal{A}(K/\mathcal{O}))$ such that $Y > Z$ and $\pi(u) \in Y$. 
Then $a(Z) \leq a(Y)$ and $a(Y) \leq p(\mathcal{A})-\wt(\pi(u)G)$ by (\ref{a(Z) 1}). 
Therefore $a(Z) = a(Y)$. 
Namely, the sets of atoms of $L$ dominated by $Z$ and $Y$ coincide. 
For any $\alpha \in \mathcal{A}$, by \zcref{KT Prop 4.2}, $Z \subseteq H_{\alpha, K/\mathcal{O}}$ if and only if there exists an atom $A \subseteq H_{\alpha, K/\mathcal{O}}$ such that $Z \subseteq A$. 
Since the similar statement holds for $Y$ and the sets of atoms dominated by $Z$ and $Y$ coincide, $Z \subseteq H_{\alpha,K/\mathcal{O}}$ if and only if $Y \subseteq H_{\alpha,K/\mathcal{O}}$, which implies $H_{Z} = H_{Y}$ by \zcref{KT Lem 4.3}. 
Moreover, since both $Z$ and $Y$ contains $\pi(u)$, we can conclude that $Z = Y$. 
However, this contradicts to $Y < Z$. 
Hence $\pi(y) \in M(Z)$. 

Next, assume that the strict inequality $a(Z) < p(\mathcal{A})- \wt(\pi(u)G)$ holds. 
Then the map $f \colon D \to R$ defined in the proof of (\ref{a(Z) 1}) is not surjective. 
Take $C \in R \setminus f(D)$. 
Then there exists $\alpha \in C$ such that $\pi(u) \in H_{\alpha, K/\mathcal{O}}$ by the definition of $R$. 
Let $Y$ be a layer of $H_{J_{Z},K/\mathcal{O}} \cap H_{\alpha, K/\mathcal{O}}$ such that $\pi(u) \in Y$. 
Since $Y \subseteq H_{J_{Z},K/\mathcal{O}}$ and both $Z$ and $Y$ contains $\pi(u)$, \zcref{KT Prop 4.2} and \zcref{KT Lem 4.3} imply that $Y \subseteq Z$. 
Assume that $Y = Z$. 
Then $Z = Y \subseteq H_{\alpha, K/\mathcal{O}}$ and \zcref{KT Prop 4.2} imply that there exists an atom $A \subseteq H_{\alpha, K/\mathcal{O}}$ such that $Z \subseteq A$. 
Then $A \in D$ and $f(A) = C$, which is a contradiction. 
Therefore $Y \subsetneq Z$, or equivalently $Y > Z$. 
Thus $\pi(u) \not\in M(Z)$. 
\end{proof}

Finally, we give a proof of a key result (\zcref{layer decomposition}) stating that the cardinality of the $\mathfrak{a}$-torsion part of the complement $M(Z)$ can be expressed in terms of the poset $L[\mathfrak{a}]$ and the absolute norm $N(\mathfrak{a})$. 

\begin{lemma}\label{layer decomposition}
Let $\mathfrak{a} \in I(\mathcal{O})$ and $Z \in L[\mathfrak{a}]$. 
Then the following equalities hold. 
\begin{enumerate}[(1)]
\item\label{layer decomposition 1} $Z[\mathfrak{a}] = \bigsqcup_{\substack{Y \in L[\mathfrak{a}] \\ Y \geq Z}} M(Y)[\mathfrak{a}]$. 
In particular, $\left(\mathfrak{a}^{-1}/\mathcal{O}\right)^{\ell} = \bigsqcup_{Y \in L[\mathfrak{a}]} M(Y)[\mathfrak{a}]$. 
\item\label{layer decomposition 2}  $\# M(Z)[\mathfrak{a}] = \sum_{\substack{Y \in L[\mathfrak{a}] \\ Y \geq Z}}\mu(Z,Y)N(\mathfrak{a})^{\dim Y}$.
\end{enumerate}
\end{lemma}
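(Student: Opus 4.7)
My plan is to prove (1) first and then obtain (2) by Möbius inversion. The heart of (1) is the assertion that each point $\pi(u) \in \left(K/\mathcal{O}\right)^{\ell}$ lies in $M(Y)$ for exactly one layer $Y \in L$. Granting this, (1) follows easily: given $\pi(u) \in Z[\mathfrak{a}]$, the corresponding $Y$ satisfies $\pi(u) \in M(Y)[\mathfrak{a}]$; moreover $Z$ is itself a layer containing $\pi(u)$, so the maximality property forces $Y \geq Z$, and \Cref{torsion lemma} applied to $\pi(u) \in Y$ forces $Y \in L[\mathfrak{a}]$. Conversely any element of $M(Y)[\mathfrak{a}]$ with $Y \geq Z$ lies in $Z[\mathfrak{a}]$ since $Y \subseteq Z$ as sets. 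The ``in particular'' statement is the special case $Z = \hat{0} = \left(K/\mathcal{O}\right)^{\ell}$, which is in $L[\mathfrak{a}]$ (it contains $\pi(0)$) and whose $\mathfrak{a}$-torsion part is exactly $\left(\mathfrak{a}^{-1}/\mathcal{O}\right)^{\ell}$.

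To prove the key geometric claim I would construct the top layer explicitly. Set $J \coloneqq \{\alpha \in \mathcal{A} \mid \pi(u) \in H_{\alpha,K/\mathcal{O}}\}$; since $\pi(u) \in H_{J,K/\mathcal{O}}$, \Cref{KT Prop 4.2} applied to some layer through $\pi(u)$ furnishes a layer $Y$ of $H_{J,K/\mathcal{O}}/\pi(H_{J,K})$ with $\pi(u) \in Y$. For any other layer $W$ with $\pi(u) \in W$, the set $J_{W}$ from \Cref{KT Lem 4.3} is contained in $J$, so $Y \subseteq H_{J,K/\mathcal{O}} \subseteq H_{J_{W},K/\mathcal{O}}$; applying \Cref{KT Prop 4.2} to $Y$ inside $H_{J_{W},K/\mathcal{O}}$ gives a layer of $H_{J_{W},K/\mathcal{O}}/\pi(H_{J_{W},K})$ containing $Y$, and since it shares the point $\pi(u)$ with $W$ it must equal $W$. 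Hence $Y \subseteq W$, i.e., $W \leq Y$ in the poset. In particular no layer lies strictly above $Y$ in $L$ while still containing $\pi(u)$, so $\pi(u) \in M(Y)$; and any $Y'$ with $\pi(u) \in M(Y')$ satisfies $Y' \leq Y$ by the same argument and $Y' = Y$ by maximality, giving uniqueness and hence the disjointness of the union.

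For (2), I take cardinalities in (1). For every $Y \in L[\mathfrak{a}]$ with $Y \geq Z$ the torsion part $Y[\mathfrak{a}]$ is nonempty by \Cref{torsion lemma}, so \Cref{cardinality of Z[a]} gives $\#Y[\mathfrak{a}] = N(\mathfrak{a})^{\dim Y}$; applying the same to $Z$ turns (1) into
\begin{equation*}
N(\mathfrak{a})^{\dim Z} \;=\; \sum_{\substack{Y \in L[\mathfrak{a}] \\ Y \geq Z}} \# M(Y)[\mathfrak{a}].
\end{equation*}
Since $L[\mathfrak{a}]$ is a finite poset, standard Möbius inversion in $L[\mathfrak{a}]$ inverts this triangular system and yields the claimed formula for $\# M(Z)[\mathfrak{a}]$.

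The main obstacle is the construction-plus-uniqueness argument for the top layer containing a prescribed point $\pi(u)$; it is the only step that genuinely uses the structural results \Cref{KT Prop 4.2} and \Cref{KT Lem 4.3}, and once it is in place everything else is straightforward counting and Möbius inversion.
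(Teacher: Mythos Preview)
Your proof is correct and follows the same approach as the paper: establish the disjoint decomposition $Z = \bigsqcup_{Y \geq Z} M(Y)$, take $\mathfrak{a}$-torsion parts, discard the terms with $Y \notin L[\mathfrak{a}]$ via \Cref{torsion lemma}, and then apply \Cref{cardinality of Z[a]} together with M\"obius inversion for part~(2). The only difference is expository: the paper asserts $Z = \bigsqcup_{Y \geq Z} M(Y)$ without argument, whereas you supply an explicit construction of the unique maximal layer through a given point using \Cref{KT Prop 4.2} and \Cref{KT Lem 4.3}.
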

\begin{proof}
(\ref{layer decomposition 1}) Taking the $\mathfrak{a}$-torsion parts of the both sides of 
$Z = \bigsqcup_{\substack{Y \in L \\ Y \geq Z}}M(Y)$
yields 
$Z[\mathfrak{a}] = \bigsqcup_{\substack{Y \in L \\ Y \geq Z}}M(Y)[\mathfrak{a}]$. 
For any $Y \in L \setminus L[\mathfrak{a}]$, $M(Y)[\mathfrak{a}] \subseteq Y[\mathfrak{a}] = \varnothing$ by \zcref{torsion lemma}. 
Therefore the desired equality holds.

(\ref{layer decomposition 2}) By \zcref{cardinality of Z[a]} and (\ref{layer decomposition 1}) , 
\begin{align*}
N(\mathfrak{a})^{\dim Z} = \sum_{\substack{Y \in L[\mathfrak{a}] \\ Y \geq Z}} \# M(Y)[\mathfrak{a}]. 
\end{align*}
By the M\"obius inversion formula, the desired equality holds. 

\end{proof}

\section{Proof of \zcref{main theorem}}\label{sec:proof}

Now, we are ready to prove the main theorem. 

\begin{proof}[Proof of \zcref{main theorem}]
(\ref{main theorem 1}) Since $\overline{\chi}_{\mathcal{A}}^{\mathrm{quasi}}(\mathfrak{a}, 0) = B_{0}(\mathfrak{a})$, the desired equality holds by the definitions of $B_{0}(\mathfrak{a})$ and $\chi_{\mathcal{A}}^{\mathrm{quasi}}(\mathfrak{a})$. 

(\ref{main theorem 2}) 
Let $[u]_{\mathfrak{a}} \in \left(\mathcal{O}/\mathfrak{a}\right)^{\ell}$ and put 
\begin{align*}
J([u]_{\mathfrak{a}}) \coloneqq \Set{\alpha \in \mathcal{A} | [u]_{\mathfrak{a}}\alpha = 0}. 
\end{align*}
Then 
\begin{align*}
\wt([u]_{\mathfrak{a}}G) = \# \Set{C \in \mathcal{P}(\mathcal{A}) | C \cap J([u]_{\mathfrak{a}}) = \varnothing}.  
\end{align*}
Therefore  
\begin{align*}
\Set{[u]_{\mathfrak{a}} \in \left(\mathcal{O}/\mathfrak{a}\right)^{\ell} | \wt([u]_{\mathfrak{a}}G) = p(\mathcal{A})-i} 
= \bigsqcup_{J} \Set{[u]_{\mathfrak{a}} \in \left(\mathcal{O}/\mathfrak{a}\right)^{\ell} | J([u]_{\mathfrak{a}}) = J}, 
\end{align*}
where $J$ runs over the subsets of $\mathcal{A}$ such that $\# \Set{C \in \mathcal{P}(\mathcal{A}) | C \cap J = \varnothing} = p(\mathcal{A})-i$. 
Put  
\begin{align*}
B_{J}(\mathfrak{a}) \coloneqq \#\Set{[u]_{\mathfrak{a}} \in \left(\mathcal{O}/\mathfrak{a}\right)^{\ell} | J([u]_{\mathfrak{a}}) = J}. 
\end{align*}
Then $B_{i}(\mathfrak{a}) = \sum_{J}B_{J}(\mathfrak{a})$. 
%It satisfies to show that $B_{J}(\mathfrak{a})$ is a quasi-polynomial in $\mathfrak{a}$ with period $\rho_{\mathcal{A}}$ for any $J \subseteq \mathcal{A}$ by \textcolor{red}{\cite[Proposition 2.2]{kuroda2024characteristic-ecaa}}.  
By the inclusion-exclusion principle, 
\begin{align*}
B_{J}(\mathfrak{a}) = \# \left(H_{J,\mathcal{O}/\mathfrak{a}} \setminus \bigcup_{J \subsetneq K \subseteq \mathcal{A}}H_{K,\mathcal{O}/\mathfrak{a}}\right)
= \sum_{J \subseteq K \subseteq \mathcal{A}} (-1)^{\# K - \# J} \# H_{K, \mathcal{O}/\mathfrak{a}}. 
\end{align*}
By \cite[Lemma 3.2]{kuroda2024characteristic-ecaa}, each $\# H_{K,\mathcal{O}/\mathfrak{a}}$ is a quasi-polynomial in $\mathfrak{a}$ with period $\rho_{\mathcal{A}}$. 
Therefore $B_{J}(\mathfrak{a})$ and $B_{i}(\mathfrak{a})$ are quasi-polynomials in $\mathfrak{a}$ with period $\rho_{\mathcal{A}}$.

(\ref{main theorem 3}) By (\ref{main theorem 2}), $\overline{\chi}_{\mathcal{A}}^{\mathrm{quasi}}(\mathfrak{a},x)$ is a quasi-polynomial in $\mathfrak{a}$ with period $\rho_{\mathcal{A}}$. 
By \zcref{charateristic quasi-polynomial} and (\ref{main theorem 1}), $\rho_{\mathcal{A}}$ is minimum. 

(\ref{main theorem 4}) Put $L \coloneqq L(\mathcal{A}(K/\mathcal{O}))$ and 
\begin{align*}
f(t,x) \coloneqq \sum_{Z \in L[\kappa]}\left(\sum_{\substack{Y \in L[\kappa] \\ Y \geq Z}}\mu(Z,Y)t^{\dim Y}\right)x^{a(Z)}. 
\end{align*}
It suffices to show that $\overline{\chi}_{\mathcal{A}}^{\mathrm{quasi}}(\mathfrak{a},x) = f(N(\mathfrak{a}),x)$ when $\mathfrak{a} + \rho_{\mathcal{A}} = \kappa$. 
Let us show this statement. 
\begin{align*}
\overline{\chi}_{\mathcal{A}}^{\mathrm{quasi}}(\mathfrak{a},x) 
&= \sum_{i=0}^{p(\mathcal{A})}B_{i}(\mathfrak{a})x^{i} 
= \sum_{[u]_{\mathfrak{a}} \in \left(\mathcal{O}/\mathfrak{a}\right)^{\ell}}x^{p(\mathcal{A})-\wt([u]_{\mathfrak{a}}G)} 
= \sum_{\pi(u) \in \left(\mathfrak{a}^{-1}/\mathcal{O}\right)^{\ell}}x^{p(\mathcal{A})-\wt(\pi(u)G)} \\
&= \sum_{Z \in L[\mathfrak{a}]}\sum_{\pi(u)\in M(Z)[\mathfrak{a}]}x^{p(\mathcal{A})-\wt(\pi(u)G)} 
= \sum_{Z \in L[\mathfrak{a}]}\sum_{\pi(u)\in M(Z)[\mathfrak{a}]}x^{a(Z)} \\
&= \sum_{Z \in L[\mathfrak{a}]}\# M(Z)[\mathfrak{a}]x^{a(Z)} 
= \sum_{Z \in L[\mathfrak{a}]}\left(\sum_{\substack{Y \in L[\mathfrak{a}] \\ Y \geq Z}}\mu(Z,Y) N(\mathfrak{a})^{\dim Y}\right)x^{a(Z)} \\
&= \sum_{Z \in L[\kappa]}\left(\sum_{\substack{Y \in L[\kappa] \\ Y \geq Z}}\mu(Z,Y) N(\mathfrak{a})^{\dim Y}\right)x^{a(Z)} 
= f(N(\mathfrak{a}), x). 
\end{align*}
Indeed, the first equality is the definition of $\overline{\chi}_{\mathcal{A}}^{\mathrm{quasi}}(\mathfrak{a},x)$. 
The second equality follows from the definition of $B_{i}(\mathfrak{a})$. 
The third equality follows from the isomorphism $\mathcal{O}/\mathfrak{a} \simeq \mathfrak{a}^{-1}/\mathcal{O}$. 
The forth equality follows from \zcref{layer decomposition}(1). 
The fifth equality follows from \zcref{a(Z)}(\ref{a(Z) 2}). 
The sixth equality holds since each term does not depend on $\pi(u)$. 
The seventh equality holds by \zcref{layer decomposition}(2). 
The eighth equality follows from $L[\mathfrak{a}] = L[\kappa]$ by \zcref{KT Cor4.1}. 
The last equality follows from the definition of $f(t,x)$. 

(\ref{main theorem 5}) By \zcref{KT Cor4.2}, the isomorphism $L[\mathcal{O}] \simeq L(\mathcal{A}(K))$ holds. 
Thus $\overline{\chi}_{\mathcal{A}}^{\mathcal{O}}(t,x) = \overline{\chi}_{\mathcal{A}(K)}(t,x)$ by (\ref{main theorem 4}) and the definition of $\overline{\chi}_{\mathcal{A}(K)}(t,x)$. 
\end{proof}

\section{Question about MacWilliams identity}\label{sec:question}

Let $C$ be a linear code in $\mathbb{F}_{p^{m}}^{n}$. 
The \textbf{dual} of $C$ is the linear code $C^{\perp}$ defined by 
\begin{align*}
C^{\perp} \coloneqq \Set{u \in \mathbb{F}_{p^{m}}^{n} | u \cdot v = 0 \text{ for all } v \in C}, 
\end{align*}
where $\cdot$ denotes the dot product. 
MacWilliams proved the following relation between the weight enumerators of $C$ and $C^{\perp}$. 

\begin{theorem}[MacWilliams identity {\cite{macwilliams1963theorem-tbstj}}]\label{MacWilliams}
Let $C$ be a linear code in $\mathbb{F}_{p^{m}}^{n}$. 
Then 
\begin{align*}
W_{C}(x,y) = \dfrac{1}{\# C^{\perp}}W_{C^{\perp}}(x+(p^{m}-1)y, x-y). 
\end{align*}
\end{theorem}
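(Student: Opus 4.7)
The plan is to prove the MacWilliams identity by character theory of the additive group $\mathbb{F}_{p^{m}}^{n}$, i.e., by the finite Poisson summation formula applied to the subgroup $C \leq \mathbb{F}_{p^{m}}^{n}$. Fix a nontrivial additive character $\chi \colon \mathbb{F}_{p^{m}} \to \mathbb{C}^{\times}$ and define, for indeterminates $x,y$, the function $f \colon \mathbb{F}_{p^{m}}^{n} \to \mathbb{Z}[x,y]$ by $f(v) \coloneqq x^{n-\wt(v)}y^{\wt(v)}$, so that $W_{C}(x,y) = \sum_{v \in C} f(v)$. The key observation is that $f$ factors coordinatewise: $f(v) = \prod_{i=1}^{n} g(v_{i})$, where $g(0) = x$ and $g(a) = y$ for $a \neq 0$.

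First I would compute the Fourier transform $\widehat{f}(u) \coloneqq \sum_{v \in \mathbb{F}_{p^{m}}^{n}} \chi(u \cdot v) f(v)$. By the multiplicative factorization, $\widehat{f}(u) = \prod_{i=1}^{n} \widehat{g}(u_{i})$, where $\widehat{g}(b) = \sum_{a \in \mathbb{F}_{p^{m}}} \chi(ab) g(a)$. Using the orthogonality relation $\sum_{a \in \mathbb{F}_{p^{m}}} \chi(ab) = 0$ for $b \neq 0$, I obtain $\widehat{g}(0) = x + (p^{m}-1)y$ and $\widehat{g}(b) = x - y$ whenever $b \neq 0$. Grouping the factors according to whether $u_{i} = 0$ gives the clean identity
\begin{align*}
\widehat{f}(u) = (x+(p^{m}-1)y)^{n-\wt(u)}(x-y)^{\wt(u)}.
\end{align*}

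Next I would invoke the finite Poisson summation formula: for any subgroup $H$ of a finite abelian group $G$ with annihilator $H^{\perp}$ (with respect to the pairing defined by $\chi$ and the dot product), one has $\sum_{h \in H} f(h) = \frac{|H|}{|G|} \sum_{u \in H^{\perp}} \widehat{f}(u)$. Applied to $G = \mathbb{F}_{p^{m}}^{n}$ and $H = C$, the annihilator is exactly the dual code $C^{\perp}$ (this is where the definition of $C^{\perp}$ via the dot product enters), and $|C| \cdot |C^{\perp}| = p^{mn}$, so the prefactor simplifies to $1/|C^{\perp}|$. Substituting the expression for $\widehat{f}$ computed above yields
\begin{align*}
W_{C}(x,y) = \sum_{v \in C} f(v) = \frac{1}{|C^{\perp}|} \sum_{u \in C^{\perp}} \widehat{f}(u) = \frac{1}{|C^{\perp}|} W_{C^{\perp}}(x+(p^{m}-1)y, x-y).
\end{align*}

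The main technical point, and the only genuine obstacle, is the clean identification of the character-theoretic annihilator $C^{\perp}$ with the code-theoretic dual: this uses that the dot product induces a perfect pairing on $\mathbb{F}_{p^{m}}^{n}$ via $\chi$, which in turn relies on the fact that any nontrivial additive character of $\mathbb{F}_{p^{m}}$ separates points. Everything else reduces either to the coordinatewise factorization of $f$ or to the standard orthogonality of characters, both of which are routine.
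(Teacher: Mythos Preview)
Your proof is correct; it is the standard character-theoretic (Poisson summation) argument for the MacWilliams identity. Note, however, that the paper does not actually supply its own proof of this theorem: it is quoted as a classical result with a citation to MacWilliams' original article, and the surrounding discussion merely alludes to two known proof strategies --- the character-theoretic one (which is precisely what you wrote out, hinging on the isomorphism $\widehat{\mathbb{F}}_{p^{m}} \simeq \mathbb{F}_{p^{m}}$) and Greene's alternative route via \Cref{Greene} and the duality of the Tutte polynomial. So there is nothing to compare against beyond observing that your approach matches the first of the two strategies the paper names.
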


For a finite abelian group $G$, let $\widehat{G} \coloneqq \Hom(G,\mathbb{Q}/\mathbb{Z})$ be the character group of $G$. 
For a finite ring $R$, $\widehat{R}$ denotes the character group of the additive group of $R$. 
The group $\widehat{R}$ is naturally regarded as the right $R$-module by the action $(\chi r)(a) \coloneqq \chi(ra)$, where $\chi \in \widehat{R}$ and $r,a \in R$. 
One of key points in the character theoretic proof of the MacWilliams identity is the fact that $\widehat{\mathbb{F}}_{p^{m}}$ is isomorphic to $\mathbb{F}_{p^{m}}$ as vector spaces over $\mathbb{F}_{p^{m}}$ (See \cite[Example 2]{wood2012applications-potsortart}, for example). 

For a finite ring $R$, if $\widehat{R}$ is isomorphic to $R$ as right $R$-modules, then a similar proof works and MacWilliams identity is generalized for such rings. 
According to \cite[Theorem 3.10]{wood1999duality-ajom}, $\widehat{R} \simeq R$ as right $R$-modules if and only if $R$ is a \textbf{Frobenius ring} when $R$ is a finite ring. 

We call a left $R$-submodule $C$ in $R^{n}$ a \textbf{left linear code}. 
The \textbf{right annihilator} $r(C)$ is defined by 
\begin{align*}
r(C) \coloneqq \Set{u \in R^{n} | u \cdot v = 0 \text{ for all } v \in C}, 
\end{align*}
where $\cdot$ denotes the dot product. 
If $R$ is commutative, we also use the notation $C^{\perp}$ for $r(C)$. 
Let $W_{C}(x,y)$ and $W_{r(C)}(x,y)$ denote the weight enumerator of $C$ and $r(C)$ with respect to the Hamming weight. 
Wood proved the following theorem. 

\begin{theorem}[{\cite[Theorem 8.1]{wood1999duality-ajom}}, {\cite[Theorem 23]{wood2012applications-potsortart}}]\label{Wood}
Let $R$ be a finite Frobenius ring and $C \subseteq R^{n}$ a left linear code.  
Then
\begin{align*}
W_{C}(x,y) = \dfrac{1}{\# r(C)}W_{r(C)}(x+(\# R -1)y, x-y). 
\end{align*}
\end{theorem}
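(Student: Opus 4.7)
The plan is to adapt the classical character-theoretic proof of the MacWilliams identity to Frobenius rings via the Poisson summation formula on the additive group $R^{n}$. The key input is the existence of a \emph{generating character} of $R$, that is, an additive character $\chi \colon R \to \mathbb{C}^{\times}$ whose kernel contains no nonzero left ideal. The cited equivalence $\widehat{R} \simeq R$ as right $R$-modules for a finite Frobenius ring produces such a $\chi$, and every character of $R$ takes the form $a \mapsto \chi(ra)$ for a unique $r \in R$. Extending coordinate-wise, every element of $\widehat{R^{n}}$ is of the form $\chi_{v}(u) \coloneqq \chi(u \cdot v)$ for a unique $v \in R^{n}$.

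The second step is to identify the character annihilator $C^{\perp} \coloneqq \Set{\psi \in \widehat{R^{n}} | \psi|_{C} = 1}$ with $r(C)$. Since $C$ is a left $R$-submodule of $R^{n}$, the set $\{u \cdot v : u \in C\}$ is a left ideal of $R$ for every $v$, because $(ru) \cdot v = r (u \cdot v)$ for $r \in R$. If $\chi_{v}|_{C} = 1$, this left ideal lies in $\ker \chi$ and hence vanishes by the generating property, so $v \in r(C)$; the reverse inclusion is immediate. In particular $\# C^{\perp} = \# r(C)$.

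The core computation is a coordinate-wise Fourier transform of the weight function $f(u) \coloneqq x^{n-\wt(u)} y^{\wt(u)} = \prod_{i} f_{0}(u_{i})$, where $f_{0}(0) = x$ and $f_{0}(a) = y$ for $a \neq 0$. The $i$-th factor of $\widehat{f}(\chi_{v}) \coloneqq \sum_{u \in R^{n}} \chi(u \cdot v) f(u)$ is $\sum_{a \in R} \chi(av_{i}) f_{0}(a)$, which equals $x + (\# R - 1)y$ when $v_{i} = 0$, and equals $x - y$ when $v_{i} \neq 0$, the latter following from the orthogonality relation $\sum_{a \in R} \chi(a v_{i}) = 0$ that holds because $Rv_{i}$ is a nonzero left ideal and hence not annihilated by $\chi$. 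Therefore $\widehat{f}(\chi_{v}) = (x + (\# R - 1) y)^{n - \wt(v)}(x - y)^{\wt(v)}$.

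Finally, Poisson summation yields
\begin{align*}
W_{C}(x, y) = \sum_{u \in C} f(u) = \frac{1}{\# C^{\perp}} \sum_{\chi_{v} \in C^{\perp}} \widehat{f}(\chi_{v}) = \frac{1}{\# r(C)} W_{r(C)}\bigl(x + (\# R - 1) y,\, x - y\bigr),
\end{align*}
which is the desired identity. The main obstacle I anticipate is the first step: invoking the structure theory of finite Frobenius rings to guarantee a generating character whose kernel avoids every nonzero left ideal. Once this is in hand, the identification of $C^{\perp}$ with $r(C)$ and the coordinate-wise Fourier transform are routine finite-abelian Fourier analysis.
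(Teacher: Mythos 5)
Your proposal is correct and takes exactly the route the paper points to: the paper does not prove this theorem itself but cites Wood, whose argument (and the sketch in the surrounding discussion of Section 4) is precisely this generating-character/Poisson-summation proof, with the same identification of the character-theoretic annihilator with $r(C)$ and the same coordinate-wise Fourier computation. The only step you gloss is that the cited isomorphism $\widehat{R}\simeq R$ as right $R$-modules a priori yields a character whose kernel contains no nonzero \emph{right} ideal; that its kernel also contains no nonzero \emph{left} ideal (which is what your injectivity of $v\mapsto\chi(\,\cdot\,v)$ and the identification of $C^{\perp}$ with $r(C)$ actually use) follows in one line, since every character of $R$ is of the form $a\mapsto\chi(ra)$ and characters separate points of the finite additive group.
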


We now return to our setting.
Let $\mathcal{O}$ be a residually finite Dedekind domain and $\mathcal{A}$ a finite subset of nonzero vectors in $\mathcal{O}^{\ell}$. 
For each nonzero ideal $\mathfrak{a}$ of $\mathcal{O}$, let $G_{\mathfrak{a}}$ be a map from $(\mathcal{O}/\mathfrak{a})^{\ell}$ to $(\mathcal{O}/\mathfrak{a})^{\mathcal{A}}$ defined by $[u]_{\mathfrak{a}} \mapsto ([u]_{\mathfrak{a}}\alpha)_{\alpha \in \mathcal{A}}$. 
Then $\Image G_{\mathfrak{a}}$ and $(\Image G_{\mathfrak{a}})^{\perp}$ are linear codes in $(\mathcal{O}/\mathfrak{a})^{\mathcal{A}}$.

By the Chinese remainder theorem, every residue ring $\mathcal{O}/\mathfrak{a}$ is isomorphic to the direct sum of rings of the form $\mathcal{O}/\mathfrak{p}^{i}$, where $\mathfrak{p}$ ranges over prime ideals dividing $\mathfrak{a}$.  
The ring $\mathcal{O}/\mathfrak{p}^{i}$ is a finite chain ring, that is, a finite ring whose ideals form a chain under inclusion. 
According to Wood \cite[Example 6]{wood2012applications-potsortart}, the finite direct sum of finite chain rings is a finite Frobenius ring. 
Thus, the residue ring $\mathcal{O}/\mathfrak{a}$ is a finite Frobenius ring and we have the following corollary.

\begin{corollary}\label{ImGa}
If every parallel class of $\mathcal{A}$ is a singleton, then 
\begin{align*}
W_{\Image G_{\mathfrak{a}}}(x,y) = \dfrac{1}{\# (\Image G_{\mathfrak{a}})^{\perp}}W_{(\Image G_{\mathfrak{a}})^{\perp}}(x+(N(\mathfrak{a})-1)y, x-y). 
\end{align*}
\end{corollary}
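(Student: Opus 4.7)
The plan is to reduce the statement directly to Theorem \ref{Wood} (Wood's generalization of the MacWilliams identity). Three ingredients need to be verified: (i) $\mathcal{O}/\mathfrak{a}$ is a finite Frobenius ring, (ii) $\Image G_{\mathfrak{a}}$ is a left linear code in $(\mathcal{O}/\mathfrak{a})^{\mathcal{A}}$ in the sense of Wood, and (iii) under the singleton-parallel-class hypothesis, the paper's weight enumerator $W_{\Image G_{\mathfrak{a}}}(x,y)$ coincides with the Hamming weight enumerator used in Theorem \ref{Wood}. Once these three are in place, the corollary follows by setting $R = \mathcal{O}/\mathfrak{a}$, $n = \#\mathcal{A}$, and $C = \Image G_{\mathfrak{a}}$ in Theorem \ref{Wood}, noting that $\#R = N(\mathfrak{a})$.

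For (i), I would invoke unique factorization of ideals in a Dedekind domain and the Chinese remainder theorem: writing $\mathfrak{a} = \mathfrak{p}_{1}^{e_{1}} \cdots \mathfrak{p}_{k}^{e_{k}}$, we have
\begin{align*}
\mathcal{O}/\mathfrak{a} \simeq \bigoplus_{i=1}^{k} \mathcal{O}/\mathfrak{p}_{i}^{e_{i}}.
\end{align*}
Each summand $\mathcal{O}/\mathfrak{p}_{i}^{e_{i}}$ is a finite local principal ideal ring (since the localization of a Dedekind domain at a nonzero prime is a discrete valuation ring), hence a finite chain ring. The paragraph preceding the corollary already cites Wood's example showing that any finite direct sum of finite chain rings is a finite Frobenius ring, which establishes (i). This is exactly the content of the paragraph that was placed immediately before the corollary, so the setup is pre-arranged for this step.

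For (ii), the map $G_{\mathfrak{a}}$ is an $\mathcal{O}/\mathfrak{a}$-linear map, and so its image is a left $\mathcal{O}/\mathfrak{a}$-submodule of $(\mathcal{O}/\mathfrak{a})^{\mathcal{A}}$. For (iii), the paper explicitly notes in the paragraph defining $\wt$ that when each parallel class is a singleton, the weight $\wt$ on $(\mathcal{O}/\mathfrak{a})^{\mathcal{A}}$ coincides with the Hamming weight; consequently $W_{\Image G_{\mathfrak{a}}}(x,y)$ is literally the Hamming weight enumerator of the left linear code $\Image G_{\mathfrak{a}}$, and its right annihilator agrees with $(\Image G_{\mathfrak{a}})^{\perp}$ under the dot product.

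I do not expect any genuine obstacle: the entire content of the corollary is the combination of the Frobenius property of $\mathcal{O}/\mathfrak{a}$ (supplied by CRT plus Wood's example) with the unambiguous specialization of $\wt$ to the Hamming weight. The only conceptual point worth flagging is why the singleton hypothesis is essential: without it, parallel columns of $G$ collapse to a single ``coordinate'' in the paper's weight, so $\wt$ no longer equals the Hamming weight on $(\mathcal{O}/\mathfrak{a})^{\mathcal{A}}$, and Theorem \ref{Wood} cannot be invoked verbatim. This is precisely why the assumption appears in the corollary, and it is also the natural starting point for the open question on MacWilliams-type identities for the general (non-singleton) coboundary quasi-polynomial.
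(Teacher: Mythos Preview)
Your proposal is correct and follows essentially the same approach as the paper: the paper places the argument in the paragraph immediately preceding the corollary, establishing that $\mathcal{O}/\mathfrak{a}$ is a finite Frobenius ring via the Chinese remainder theorem and Wood's example on finite chain rings, and then declares the corollary as an immediate consequence of Theorem~\ref{Wood}. Your explicit verification of points (ii) and (iii) simply spells out what the paper leaves implicit.
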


Greene \cite{greene1976weight-siam} gave another proof of the MacWilliams identity (\zcref{MacWilliams}) by using \zcref{Greene} and the duality of the Tutte polynomial. 
It is a natural question to ask whether we can prove \zcref{ImGa} by the same idea. 
Namely, are there the ``dual notions" $\mathcal{A}^{\ast}$ and $G^{\ast}_{\mathfrak{a}}$ such that $W_{\Image G^{\ast}_{\mathfrak{a}}}(x,y) = W_{(\Image G_{\mathfrak{a}})^{\perp}}(x,y)$?

Fink and Moci \cite{fink2016matroids-jotems} introduced ``matroids over a ring".
Every finite subset $\mathcal{A}$ of $\Gamma \coloneqq \mathcal{O}^{\ell}$ determines a matroid over $\mathcal{O}$ and we can take its dual. 
The dual matroid is also representable. 
Namely, it is determined by a finite subset $\mathcal{A}^{\ast}$ in a certain finitely generated $\mathcal{O}$-module $\Gamma^{\ast}$. 
We can define a map $G^{\ast}_{\mathfrak{a}}$ by a similar way. 

\begin{question}
Let $\mathcal{A} \subseteq \mathcal{O}^{\ell}$ be a finite subset and $\mathcal{A}^{\ast}$ its dual in the sense of Fink and Moci. 
Does the equality $W_{\Image G^{\ast}_{\mathfrak{a}}}(x,y) = W_{(\Image G_{\mathfrak{a}})^{\perp}}(x,y)$ hold?
\end{question}

We give an easy example for the question above. 
\begin{example}
Consider the case $\mathcal{O} = \mathbb{Z}$. 
Let $\Gamma = \mathbb{Z}^{2}$ and $\mathcal{A} = \{\alpha_{1}, \alpha_{2}\} \subseteq \Gamma$, where 
\begin{align*}
\alpha_{1} = \begin{pmatrix}
2 \\ 0
\end{pmatrix}, \qquad
\alpha_{2} = \begin{pmatrix}
0 \\ 1
\end{pmatrix}. 
\end{align*}
Then $\Gamma^{\ast} = \mathbb{Z}/2\mathbb{Z}$ and $\mathcal{A}^{\ast}=\{\alpha_{1}^{\ast}, \alpha_{2}^{\ast}\}$, where $\alpha_{1}^{\ast} = 1, \alpha_{2}^{\ast} = 0$. 

For a positive integer $q$, we define a map $G_{q} \colon \Hom(\Gamma, \mathbb{Z}/q\mathbb{Z}) \to (\mathbb{Z}/q\mathbb{Z})^{2}$ by $\phi \mapsto (\phi(\alpha_{1}), \phi(\alpha_{2}))$. 
Then $\Image G_{q} = \langle (2,0), (0,1) \rangle$ and 
\begin{align*}
(\Image G_{q})^{\perp} = 
\begin{cases}
\{(0,0)\} & \text{ if $q$ is odd}; \\
\{(0,0), (\frac{q}{2},0)\} & \text{ if $q$ is even}. 
\end{cases}
\end{align*}
Therefore 
\begin{align*}
W_{\Image G_{q}}(x,y) &= 
\begin{cases}
x^{2} + 2(q-1)xy + (q-1)^{2}y^{2} & \text{ if $q$ is odd}; \\
x^{2} + (\frac{3}{2}q-2)xy + (\frac{q}{2}-1)(q-1)y^{2} & \text{ if $q$ is even}, 
\end{cases} \\
W_{(\Image G_{q})^{\perp}}(x,y) &= 
\begin{cases}
x^{2} & \text{ if $q$ is odd}; \\
x^{2}+xy & \text{ if $q$ is even}. 
\end{cases}
\end{align*}
We can verify that the MacWilliams identity holds. 

Define the map $G^{\ast}_{\mathfrak{a}} \colon \Hom(\Gamma^{\ast},\mathbb{Z}/q\mathbb{Z}) \to (\mathbb{Z}/q\mathbb{Z})^{2}$ by $\phi \mapsto (\phi(\alpha^{\ast}_{1}), \phi(\alpha^{\ast}_{2}))$.  
Then we can compute $\Image G^{\ast}_{q} = (\Image G_{q})^{\perp}$. 
Thus $W_{G^{\ast}_{q}}(x,y) = W_{(\Image G_{q})^{\perp}}(x,y)$. 
\end{example}

\section*{Acknowledgments}
S. T. was supported by JSPS KAKENHI, Grant Number JP22K13885 and JP23H00081. 
This work was supported by Institute of Mathematics for Industry, Joint Usage/Research Center in Kyushu University. (FY2023 Workshop(II) ``Invariants for discrete mathematics'' (2023a002) and FY2024 Workshop(II) ``Polynomial invariants related to error-correcting codes and hyperplane arrangements'' (2024a018)).

\bibliographystyle{amsplain}
\bibliography{bibfile}

\end{document}